\documentclass[12pt]{amsart}
\usepackage{amscd,amsmath,amsthm,amssymb}
\usepackage{pstcol,pst-plot,pst-3d}
\usepackage{color}
\usepackage{pstricks}
\usepackage{stmaryrd}
\newpsstyle{fatline}{linewidth=1.5pt}
\newpsstyle{fyp}{fillstyle=solid,fillcolor=verylight}
\definecolor{verylight}{gray}{0.97}
\definecolor{light}{gray}{0.9}
\definecolor{medium}{gray}{0.85}
\definecolor{dark}{gray}{0.6}

 %
 %
 %
 \def\NZQ{\mathbb}               

 \def\ZZ{{\NZQ Z}}
 \def\RR{{\NZQ R}}

 %
 %
 \def\frk{\mathfrak}               

 \def\mm{{\frk m}}

 %

 \def\G{{\mathcal G}}

 \def\D{{\mathcal D}}

 \def\Sc{{\mathcal S}}
 \def\Tc{{\mathcal T}}

 %
 
 \def\bb{{\mathbf b}}

 \def\eb{{\mathbf e}}
 \def\opn#1#2{\def#1{\operatorname{#2}}} 
 %
 \opn\chara{char} \opn\length{\ell} \opn\pd{pd} \opn\rk{rk}
 \opn\projdim{proj\,dim} \opn\injdim{inj\,dim} \opn\rank{rank}
 \opn\depth{depth} \opn\grade{grade} \opn\height{height}
 \opn\embdim{emb\,dim} \opn\codim{codim}
 
 \opn\Tr{Tr} \opn\bigrank{big\,rank}
 \opn\superheight{superheight}\opn\lcm{lcm}
 \opn\trdeg{tr\,deg}
 \opn\reg{reg} \opn\lreg{lreg} \opn\ini{in} \opn\lpd{lpd}
 \opn\size{size} \opn\sdepth{sdepth}
 \opn\link{link}\opn\fdepth{fdepth}\opn\lex{lex}
 \opn\tr{tr}
 \opn\type{type}
 \opn\gap{gap}
 \opn\depth{depth}

 %
 \opn\div{div} \opn\Div{Div} \opn\cl{cl} \opn\Cl{Cl}
 %
 %
 \opn\Spec{Spec} \opn\Supp{Supp} \opn\supp{supp} \opn\Sing{Sing}
 \opn\Ass{Ass} \opn\Min{Min}\opn\Mon{Mon}
 %
 %
 %
 \opn\Im{Im} \opn\Ker{Ker} \opn\Coker{Coker} \opn\Am{Am}
 \opn\Hom{Hom} \opn\Tor{Tor} \opn\Ext{Ext} \opn\End{End}
 \opn\Aut{Aut} \opn\id{id}
 
 \opn\nat{nat}
 \opn\pff{pf}
 \opn\Pf{Pf} \opn\GL{GL} \opn\SL{SL} \opn\mod{mod} \opn\ord{ord}
 \opn\Gin{Gin} \opn\Hilb{Hilb}\opn\sort{sort}
 \opn\PF{PF}\opn\Ap{Ap}
 %
 %
 \opn\aff{aff}
 \opn\relint{relint} \opn\st{st}
 \opn\lk{lk} \opn\cn{cn} \opn\core{core} \opn\vol{vol}  \opn\inp{inp} \opn\nilpot{nilpot}
 \opn\link{link} \opn\star{star}\opn\lex{lex}\opn\set{set}
 \opn\width{wd}
 \opn\Fr{F}
 \opn\QF{QF}
 \opn\G{G}
 \opn\type{type}\opn\res{res}
 \opn\conv{conv}
 \opn\gr{gr}
 
 %
 %
 
 \def\pot#1#2{#1[\kern-0.28ex[#2]\kern-0.28ex]}

 %
 %
 \opn\dirlim{\underrightarrow{\lim}}
 \opn\inivlim{\underleftarrow{\lim}}
 %
 %
 %
 
 \let\sect=\cap
 \let\dirsum=\oplus
 
 \let\iso=\cong

 \let\Dirsum=\bigoplus
 
 %
 %
 \let\to=\rightarrow
 
 \def\Implies{\ifmmode\Longrightarrow \else
         \unskip${}\Longrightarrow{}$\ignorespaces\fi}
 \def\implies{\ifmmode\Rightarrow \else
         \unskip${}\Rightarrow{}$\ignorespaces\fi}
 \def\iff{\ifmmode\Longleftrightarrow \else
         \unskip${}\Longleftrightarrow{}$\ignorespaces\fi}

 \let\:=\colon
 \newtheorem{Theorem}{Theorem}[section]
 \newtheorem{Lemma}[Theorem]{Lemma}
 \newtheorem{Corollary}[Theorem]{Corollary}

 %
 %
 \let\epsilon\varepsilon
 \let\kappa=\varkappa
 %
 %
 \textwidth=15cm \textheight=22cm \topmargin=0.5cm
 \oddsidemargin=0.5cm \evensidemargin=0.5cm \pagestyle{plain}
 %
 %
 \def\qed{\ifhmode\textqed\fi
       \ifmmode\ifinner\quad\qedsymbol\else\dispqed\fi\fi}
 \def\textqed{\unskip\nobreak\penalty50
        \hskip2em\hbox{}\nobreak\hfil\qedsymbol
        \parfillskip=0pt \finalhyphendemerits=0}
 \def\dispqed{\rlap{\qquad\qedsymbol}}
 
 %
 \opn\dis{dis}
 \def\pnt{{\raise0.5mm\hbox{\large\bf.}}}
 
 \opn\Lex{Lex}

 

 \begin{document}
\title {On the fiber cone of monomial ideals}

\author {J\"urgen Herzog and  Guangjun Zhu$^{^*}$}

\address{J\"urgen Herzog, Fachbereich Mathematik, Universit\"at Duisburg-Essen, Campus Essen, 45117
Essen, Germany} \email{juergen.herzog@uni-essen.de}

\address{Guangjun Zhu, School of Mathematical Sciences, Soochow University,
 Suzhou 215006, P. R. China}\email{zhuguangjun@suda.edu.cn(Corresponding author:Guangjun Zhu)}

\dedicatory{ }

\begin{abstract}
We consider the fiber cone of monomial ideals. It is shown that for monomial ideals $I\subset K[x,y]$ of height $2$,  generated by $3$ elements,  the fiber cone $F(I)$ of $I$ is a hypersurface ring, and that $F(I)$  has positive depth for interesting  classes of height $2$  monomial ideals $I\subset K[x,y]$,  which are   generated by $4$ elements.  For these classes of ideals we also  show that $F(I)$ is Cohen--Macaulay if and only if the defining ideal $J$ of $F(I)$ is generated by at most 3 elements. In all the cases a minimal set of generators of $J$ is determined.
\end{abstract}

\thanks{The paper was written while the second author was visiting the Department of Mathematics of University
Duisburg-Essen. She  spent a memorable time at Essen, so she would like to express
her hearty thanks to Maja for hospitality. She also wishes to thank for the hospitality of
Department of Mathematics of University Duisburg-Essen, Germany.\\
\hspace{5mm}* Corresponding author.}

\subjclass[2010]{Primary 13C15; Secondary 05E40, 13A02, 13F20, 13H10.}


\keywords{Monomial ideals, fiber cones, hypersurface ring,  symmetric ideals}

\maketitle

\setcounter{tocdepth}{1}

\section*{Introduction}
Let $I\subset S$ be a graded ideal in the polynomial ring $S=K[x_1,\ldots,x_n]$ over the field $K$. Let $\mm=(x_1,\ldots,x_n)$ be the graded maximal ideal of $S$, and $R(I)=\Dirsum_{j\geq 0}I^j$ the Rees algebra of $I$. Then the fiber cone of $I$ is the standard graded $K$-algebra $F(I)=R(I)/\mm R(I)$. Let $\mu(L)$ denote the minimal number of generators of a  graded ideal $L\subset S$.  Motivated by the fact that $\mu(I^k)\leq \mu(I^{k+1})$ for all $k\geq 1$, if $\depth F(I)>0$, we ask  when  $F(I)$ have positive depth. In concrete cases this question is hard to  answer.

In this paper we focus on monomial ideals $I$ of height $2$ in a polynomial with two variables. It turns out that even this case, which is the simplest possible to consider,  the problem is pretty hard. For example, $\depth F(I)=0$  for  the ideal $I=(x^{25}, x^{20}y^{5}, x^{19}y^{19}, x^{5}y^{20}, y^{25})\subset K[x,y]$.  On the other hand, by checking hundreds of examples, we could not find any monomial ideal $I\subset K[x,y]$ of height $2$ with $\mu(I)\leq 4$ and $\depth F(I)=0$. Thus we expect that $\depth F(I)>0$, if $\mu(I)\leq 4$. This is trivially true when $\mu(I)=2$, in which case $F(I)$ is a polynomial ring. For the case $\mu(I)=3$, we show that  $F(I)$ is a hypersurface ring, and hence is Cohen--Macaulay.  This is the content of Section $2$.

For a Noetherian local ring with infinite residue class field, Heinzer and Kim \cite[Proposition 5.4]{HK} give several equivalent conditions for $F(I)$  being a hypersurface ring, and in \cite[Theorem 5.6 ]{HK}  they provide a sufficient condition for this  in terms of a depth condition for the associated graded ring of $I$. But all these criteria are hard to apply in our concrete case. Our approach in this,  and also in the other cases, considered later in this paper, is more straightforward, and is aimed at computing the generators of the defining ideal $J$ of $F(I)$ explicitly.  This approach always leads to the problem to find integer solutions to linear inequalities with integer coefficients. Of course, in general,  this is a difficult problem, and hence we can present only partial results for the depth problem of the fiber cone  of a monomial ideal $I\subset K[x,y]$ for height $2$ and with $4$ generators.

In Section $3$ we consider symmetric ideals, that is ideals $I\subset K[x,y]$, $I=(x^c, x^by^a, x^ay^b, y^c)$ with $c>b>a>0$ and $\gcd(a,b,c)=1$.  In \cite{HMQ} it is shown that in this case $F(I)$ is Cohen--Macaulay if and only if $b=a+1$, and this is the case if and only if $\mu(J)\leq 3$. More generally,  the Cohen--Macaulayness of $F(I)$ holds for  $F(I)\cong K[x^c, x^{c-a_1}y^{a_1}, x^{c-a_2}y^{a_2}, y^c]$ with $0<a_1<a_2< c$, if and only if $\mu(J)\leq 3$,  as was shown by    Bresinsky,  Schenzel and Vogel \cite[Lemma  4]{BSV}. Here, as before, $J$ denotes  the defining ideal  of $F(I)$. Note that such rings may be viewed as  the coordinate ring of a projective monomial curve. Projective monomial curves have been studied in many papers, see for example \cite{BR}, \cite{CN}, \cite{HS}, \cite{MPT} and  \cite{RR}.  For the symmetric ideal $I$, the fiber cone $F(I)$ is the coordinate ring of a projective monomial curve only if $a+b=c$.  Yet, also when $a+b\neq c$,  it is shown in Corollary~\ref{noteasy} that $F(I)$ is Cohen--Macaulay, if and only if $\mu(J)\leq 3$.

In the last section we study the fiber cone of monomial ideals of the form $I=(x^{2a},x^{a}y^{b},x^{c}y^{d},y^{2b})$. It turns out that the fiber cone of this type of ideals is a complete intersection. The different cases to be discussed are treated in Theorem~\ref{Romania}, Theorem~\ref{kolon} and Theorem~\ref{Hibinotcome}, respectively.

We expect that the results regarding the  depth of the fiber cone  obtained for the monomial ideals considered in this paper   hold for all monomial ideals of the form $I=(x^a,x^cy^d,x^ey^f,y^b)$. In other words, we expect that for any ideal of this type we always have  $\depth F(I)>0$,  and that $F(I)$ is Cohen-Macaulay, if and only if the defining ideal $J$ of $F(I)$ is generated by at most $3$ elements.

\medskip
\section{Generalities about the relations of the fiber cone}
Let $K$ be a field, $S=K[x_1,\ldots,x_n]$ the polynomial ring in $n$ indeterminates with graded maximal ideal $\mm=(x_1,\ldots,x_n)$, and  $I\subset S$ a
monomial ideal with minimal set of monomial generators $G(I)=\{u_1,\ldots,u_m\}$.  We denote by $R(I)=\Dirsum_{j\geq 0}I^j$ the Rees algebra of $I$ and by
$F(I)=\Dirsum_{j\geq 0}I^j/\mm I^j$ the fiber cone of $I$.

Let $F(I)\cong T/J$, where  $T=K[z_1, \ldots, z_{m}]$ is the polynomial ring over $K$, and $J$ is  the kernel of the $K$-algebra  homomorphism $T\to F(I)$
defined by $z_i\mapsto u_i+\mm I$ for  $i=1,\ldots,m$. The relations of $F(I)$ can be obtained from the relations of the Rees ring by reduction modulo
$\mm$. The Rees ring $R(I)$ of $I$ is a toric  standard graded $S$-algebra. Therefore, the
 relations belonging to a minimal set of generators of  the defining ideal  $L$  of   $R(I)$  are  of the form
\begin{eqnarray}
\label{relation}
f=u\prod_{i=1}^{m}z_i^{r_i}-v\prod_{i=1}^{m}z_i^{s_i},
\end{eqnarray}
with $\gcd(u\prod_{i=1}^{m}z_i^{r_i}, v\prod_{i=1}^{m}z_i^{s_i})=1$, where  $u$  and $v$  are monomials in $S$, satisfying
\[
u\prod_{i=1}^{m} u_i^{r_i}=v\prod_{i=1}^{m}u_i^{s_i}.
\]
In addition one has
\[
 \sum_{i=1}^{m }r_i=\sum_{i=1}^{m}s_i>0.
\]
In particular it follows that $J$ is generated by  monomials and homogeneous binomials.

\medskip
\section{The case when $F(I)$ is a hypersurface}
We turn to the  special case described in the following
\begin{Lemma}
\label{special}
Let $I\subset S$  be the monomial ideal with $G(I)=\{u_1,\ldots,u_{n+1}\}$, where $u_i=x_i^{a_i}$ for $i=1,\dots, n$ and $u_{n+1}=x_1^{b_1}\cdots
x_n^{b_n}$ with $0<b_i<a_i$ for $i=1,\ldots,n$. If $\prod\limits_{i=1}^{n+1}z_i^{r_i}\in J$ with $r_{n+1}>0$, then $z_n^{r_{n+1}}\in J$.
\end{Lemma}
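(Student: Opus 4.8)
The plan is to convert everything into the combinatorial language of minimal generators of powers of $I$ and then to perform a degree-reducing descent. First I would record the dictionary coming from the fiber cone: since $[F(I)]_d=I^d/\mm I^d$ and $z_i\mapsto u_i+\mm I$, a monomial $\prod_{i=1}^{n+1}z_i^{r_i}$ of degree $d=\sum_i r_i$ lies in $J$ precisely when $\prod_{i=1}^{n+1}u_i^{r_i}\in\mm I^{d}$; equivalently, when $\prod_i u_i^{r_i}$ is \emph{not} a minimal monomial generator of $I^{d}$, i.e. there is a different product $g=\prod_i u_i^{s_i}$ with $\sum_i s_i=d$ and $g\mid\prod_i u_i^{r_i}$ but $g\neq\prod_i u_i^{r_i}$. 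Under the same dictionary, the desired conclusion $z_n^{r_{n+1}}\in J$ is the assertion that $u_n^{r_{n+1}}=x_n^{a_nr_{n+1}}$ fails to be a minimal generator of $I^{r_{n+1}}$. Thus the entire problem reduces to: starting from a proper divisor of the large monomial $\prod_i u_i^{r_i}$, manufacture a proper divisor of the pure power $u_n^{r_{n+1}}$ among the products of $r_{n+1}$ generators of $I$.

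Next I would make the divisibility explicit, exploiting the very special shape of the generators. Writing $\prod_i u_i^{r_i}=\prod_{j=1}^n x_j^{a_jr_j+b_jr_{n+1}}$ and $g=\prod_{j=1}^n x_j^{a_js_j+b_js_{n+1}}$, the relation $g\mid\prod_i u_i^{r_i}$ is the system $a_js_j+b_js_{n+1}\le a_jr_j+b_jr_{n+1}$ for $j=1,\dots,n$, together with $\sum_i s_i=\sum_i r_i$ and strict inequality in at least one coordinate. Rearranging each one as $a_j(s_j-r_j)\le b_j(r_{n+1}-s_{n+1})$ isolates the contribution of the single mixed generator $u_{n+1}$. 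Because $r_{n+1}>0$, the idea is to show that in a suitably chosen witness the whole deficiency can be transferred onto the $u_{n+1}$-part, forcing $s_{n+1}<r_{n+1}$ and leaving the pure powers $u_1,\dots,u_n$ to account for the remaining degree; this is what should localize the obstruction to minimality in the mixed generator.

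The heart of the argument, and the step I expect to be the main obstacle, is the \emph{descent} from degree $d$ to degree $r_{n+1}$: from the witness $(s_i)$ for $\prod_i u_i^{r_i}$ I want to extract a vector $(s_i')$ with $\sum_i s_i'=r_{n+1}$ that certifies the non-minimality of $u_n^{r_{n+1}}$, essentially by peeling off the bulk contribution $\prod_{j\le n}u_j^{r_j}$ and keeping only the part of the relation that strictly lowers an exponent. The difficulty here is precisely the one flagged in the introduction: the solvability of the resulting linear system over the nonnegative integers, where one must keep $(s_i')$ nonnegative, respect $\sum_i s_i'=r_{n+1}$, and still force a strict drop somewhere, and this is where the hypotheses $0<b_i<a_i$ enter decisively. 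I would first dispose of the extreme case in which the witness uses $u_{n+1}$ with multiplicity $0$, and then bootstrap the general case by removing one factor of $u_{n+1}$ at a time, inducting on $r_{n+1}$ (or on the total degree $d$) and invoking the monomial/binomial description of $J$ from Section~1 to organize the reduction; a successful induction closes the argument.
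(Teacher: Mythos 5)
Your proposal founders on a misreading of the target before the combinatorics even starts. The conclusion of the lemma should read $z_{n+1}^{r_{n+1}}\in J$, not $z_n^{r_{n+1}}\in J$: this is a typo in the statement, as the paper's own proof makes explicit (``in all three cases it follows that $z_{n+1}^{r_{n+1}}\in J$'') and as the application in Theorem~\ref{$n+1$generators} requires, where the lemma is used to replace all monomial generators of $J$ by pure powers of $z_{n+1}$. Taking the printed statement at face value, you translate the goal into the assertion that $u_n^{r_{n+1}}=x_n^{a_nr_{n+1}}$ fails to be a minimal generator of $I^{r_{n+1}}$, and you set out to manufacture a proper divisor of this pure power among products of $r_{n+1}$ generators. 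That goal is unattainable: every generator other than $u_n$ involves some variable $x_j$ with $j\neq n$ (for $u_{n+1}$ because $b_1>0$; note $n\geq 2$ is forced, since for $n=1$ the set $G(I)$ would not be minimal), so the only product of generators dividing $x_n^{a_nr_{n+1}}$ is $u_n^{r_{n+1}}$ itself. Hence $u_n^{r}$ is a minimal generator of $I^{r}$ for every $r$, and $z_n^{r}$ is never in $J$. Your second paragraph, which speaks of localizing the obstruction to minimality ``in the mixed generator,'' shows you were circling the correct statement, but both times you fix the target you name $u_n^{r_{n+1}}$, and any descent aimed there must fail.

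The second gap is that, even with the target corrected to $u_{n+1}^{r_{n+1}}\in\mm I^{r_{n+1}}$, your plan defers exactly the step that constitutes the proof --- the ``descent'' is announced, flagged as the main obstacle, and then replaced by an unexecuted induction (``removing one factor of $u_{n+1}$ at a time''). The paper's argument needs no induction and is quite short once set up: normalize the witness relation to $f=(\prod_{i=1}^{t}z_i^{r_i})z_{n+1}^{r_{n+1}}-v\prod_{i=t+1}^{n}z_i^{s_i}$ with coprime terms, compare exponents of $x_i$ for $i>t$ to obtain $b_ir_{n+1}\geq a_is_i$, and note $\sum_{i=t+1}^{n}s_i=r_{n+1}+c$ with $c=\sum_{i=1}^{t}r_i>0$. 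One then deletes $c$ of the $z$-factors on the right (three cases: a single $s_j\geq c$; a subfamily of the $s_i$ summing exactly to $c$; the intermediate case), paying for each deleted $z_i$ with a factor $x_i^{a_i}$ in the scalar coefficient $w$; the result is a relation $z_{n+1}^{r_{n+1}}-w\,(\text{monomial in }z_{t+1},\dots,z_n)$ in $L$, and $w\neq 1$ is automatic because $w$ contains $\prod_{i=1}^{t}x_i^{b_ir_{n+1}}$ with $c,r_{n+1},b_i>0$, whence $z_{n+1}^{r_{n+1}}\in J$. Note also that the hypothesis $b_i<a_i$, which you expected to ``enter decisively'' in the descent, plays no role there; only $b_i>0$ is used, to force $w\neq 1$. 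As written, then, your proposal both aims at a false conclusion and leaves the heart of the lemma unproved.
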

\begin{proof}
In view of (\ref{relation}) there exists $f\in L$ of the form $f=\prod\limits_{i=1}^{n+1}z_i^{r_i}-v\prod_{i=1}^{n}z_i^{s_i}$.
Suppose that $r_i>0$ for some $i<n+1$. Since $\gcd(\prod\limits_{i=1}^{n+1}z_i^{r_i}, v\prod_{i=1}^{n}z_i^{s_i})=1$, we may further assume that
$f=(\prod_{i=1}^{t}z_i^{r_i})z_{n+1}^{r_{n+1}}-v\prod_{i=t+1}^{n}z_i^{s_i}$ with $r_{n+1}=\sum_{i=t+1}^{n}s_i-c$, where $c=\sum_{i=1}^{t}r_i>0$.
It  follows that
\[
\prod_{i=t+1}^{n}x_i^{b_ir_{n+1}}=v'\prod_{i=t+1}^{n}x_i^{a_is_i},
\]
 where  $v'$  is a monomial in $S$,
which implies that $b_ir_{n+1}\geq a_is_i$ for  $i=t+1,\ldots,n$.
We consider the following three cases:

(i) There exists some $j\in \{t+1,\ldots,n\}$ such that $s_j\geq c$.
Then  $f=z_{n+1}^{r_{n+1}}-wz_j^{s_j-c}\prod\limits_{\begin{subarray}{c}
i=t+1\\
i\neq j
\end{subarray}}^{n}z_i^{s_i}$ with
 $w=v'x_j^{a_jc}\prod_{i=1}^{t}x_i^{b_ir_{n+1}}$  belongs to  $L$.

(ii)  $s_j<c$ for all $j\in \{t+1,\ldots,n\}$  and there exist $i_1,\ldots,i_m\in \{t+1,\ldots,n\}$ such that   $c=\sum\limits_{j=1}^{m}s_{i_j}$.
Then   $f=z_{n+1}^{r_{n+1}}-w\prod\limits_{\begin{subarray}{c}
i=t+1\\
i\notin\{i_1,\ldots,i_m\}
\end{subarray}}^{n}z_i^{s_i}$ with  $w=v'(\prod\limits_{j=1}^{m}x_{ij}^{a_{ij}s_{ij}})(\prod\limits_{i=1}^{t}x_i^{b_ir_{n+1}})$ belongs to   $L$.

(iii) There exists some $m$ such that $s_{1}+\dots+s_{m}<c<s_{1}+\dots+s_{{m+1}}$.
Then   $f=z_{n+1}^{r_{n+1}}-w(\prod\limits_{
i=m+2}^{n}z_i^{s_i})z_{m+1}^{s_{m+1}-(c-\sum\limits_{i=1}^{m}s_{i})}$ with
$w=v'(\prod\limits_{j=1}^{m}x_j^{a_js_j})(\prod\limits_{i=1}^{t}x_i^{b_ir_{n+1}})x_{m+1}^{a_{m+1}(c-\sum\limits_{i=1}^{m}s_{i})}$ belongs to  $L$.

In all three cases it follows that $z_{n+1}^{r_{n+1}}\in J$, as desired.
\end{proof}

 Let $H$ be the hyperplane in $\RR^n$  passing through the points $a_i\eb_i$, $i=1,\ldots,n$,  where $\eb_1,\ldots,\eb_n$ is the standard unit basis of
 $\RR^n$. Then
\[
H=\{(b_1,\ldots,b_n)\in \RR^n\: \sum_{i=1}^n\frac{b_i}{a_i}=1\}.
\]
Let   $H^{\pm}$ be  the two open half spaces defined by $H$.  Then
\[
H^+=\{(b_1,\ldots,b_n)\in \RR^n\: \sum_{i=1}^n\frac{b_i}{a_i}>1\} \quad  \text{and} \quad H^-=\{(b_1,\ldots,b_n)\in \RR^n\:
\sum_{i=1}^n\frac{b_i}{a_i}<1\}.
\]

 With this  notation introduced we now have
\begin{Theorem}\label{$n+1$generators}
Let  $I=(x_1^{a_1},\ldots,x_n^{a_n},\prod\limits_{i=1}^{n}x_i^{b_i})\subset  S=K[x_1,\ldots,x_n]$ with $a_i>b_i>0$ for $i=1,\dots,n$, and let
$\bb=(b_1,\ldots,b_n)$.
Then there exist integers $r_i$ such that
\[
J= \left\{
\begin{array}{ll}
(z_{n+1}^{r_{n+1}}-\prod_{i=1}^nz_i^{r_i} ), &   \text{if}\ \ \bb\in H,\\
(z_{n+1}^{r_{n+1}}), & \text{if} \ \ \bb\in H^+,\\
(\prod_{i=1}^nz_i^{r_i}), & \text{if}\ \ \bb\in H^-.
\end{array}
\right.
 \]
\end{Theorem}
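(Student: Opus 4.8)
The plan is to determine all minimal generators of $J$ by exploiting that, by the discussion in Section~1, $J$ is generated by monomials $\prod_i z_i^{r_i}$ and by homogeneous binomials, and then to sort these generators into the three cases according to the position of $\bb$ relative to $H$. Throughout I use the elementary translation: a monomial $\prod_{i=1}^{n+1}z_i^{r_i}$ lies in $J$ if and only if $(\prod_{i=1}^{n}x_i^{a_ir_i})\,u_{n+1}^{r_{n+1}}\in\mm I^{|r|}$, where $|r|=\sum_i r_i$, i.e. iff some generator $g'=\prod_{i=1}^n u_i^{t_i}u_{n+1}^{k}$ of $I^{|r|}$ properly divides it. Writing out the exponent of each $x_j$ turns this into a system of integer linear inequalities, and essentially the whole proof is the analysis of when this system (and the analogous one for binomials) is solvable.

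First I would treat the binomials. A homogeneous binomial $\prod z_i^{r_i}-\prod z_i^{s_i}$ with coprime terms is a genuine new generator of $J$ only when both monomials are minimal generators of the same $I^{|r|}$, which forces the exact monomial identity $\prod_i u_i^{r_i}=\prod_i u_i^{s_i}$. Because $u_1,\dots,u_n$ involve pairwise disjoint variables and $u_{n+1}=\prod_i x_i^{b_i}$, comparing exponents of each $x_j$ shows that such an identity with coprime terms and equal $z$-degree can only be $z_{n+1}^{r_{n+1}}-\prod_{i=1}^n z_i^{r_i}$ with $a_ir_i=b_ir_{n+1}$; the degree condition $\sum_i r_i=r_{n+1}$ then reads $\sum_i b_i/a_i=1$, i.e. $\bb\in H$. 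Thus binomials occur only when $\bb\in H$, where the lattice of exact relations is $\ZZ$-spanned by the primitive vector $(r_1,\dots,r_n,-r_{n+1})$ with $r_i=b_ir_{n+1}/a_i$ and $r_{n+1}=\lcm_i(a_i/\gcd(a_i,b_i))$. Since this lattice has rank one and is primitive, its lattice ideal is principal, generated by $z_{n+1}^{r_{n+1}}-\prod_{i=1}^n z_i^{r_i}$; combined with the absence of monomial generators on $H$ (established next) this gives $J$ in the first case.

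Next the monomials. Here Lemma~\ref{special} is the workhorse: any monomial generator involving $z_{n+1}$ can be replaced by a pure power $z_{n+1}^{r_{n+1}}$, and $z_{n+1}^{\rho}\in J$ exactly when $u_{n+1}^{\rho}\in\mm I^{\rho}$. Substituting $s_j=r_j-t_j$ rewrites the divisibility inequalities as $a_js_j\ge b_j\sum_\ell s_\ell$, and summing over $j$ shows that the two monomial phenomena are governed precisely by the sign of $\sum_i b_i/a_i-1$: a proper divisor witnessing $u_{n+1}^{\rho}\in\mm I^{\rho}$ exists iff $\bb\in H^+$, whereas a proper divisor witnessing $\prod_i x_i^{a_ir_i}\in\mm I^{|r|}$ (a monomial in $z_1,\dots,z_n$ only) exists iff $\bb\in H^-$. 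On $H$ every such inequality system forces all inequalities to become equalities, so the candidate divisor equals the original monomial and no monomial lies in $J$. This simultaneously shows that in $H^+$ the only generators are powers of $z_{n+1}$ and in $H^-$ the only generators are monomials in $z_1,\dots,z_n$.

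It remains to see that in the two monomial cases $J$ is in fact principal, and this is the step I expect to be the main obstacle. For $\bb\in H^+$ it is easy: the set of $\rho$ with $z_{n+1}^{\rho}\in J$ is an upward-closed subset of $\ZZ_{\ge 0}$, hence $J=(z_{n+1}^{r_{n+1}})$ for the least such $r_{n+1}$. For $\bb\in H^-$ the minimal generators of $J$ correspond to the minimal nonzero lattice points of the rational cone $C=\{\,s\in\ZZ_{\ge 0}^n:\ a_js_j\ge b_j\sum_\ell s_\ell\ \text{for all }j\,\}$, and I would prove there is a unique such point by showing that $C$ is closed under componentwise minimum: if $s,s'\in C$ then $a_j\min(s_j,s'_j)=\min(a_js_j,a_js'_j)\ge b_j\min(\sum_\ell s_\ell,\sum_\ell s'_\ell)\ge b_j\sum_\ell\min(s_\ell,s'_\ell)$, using that a sum of minima is at most the minimum of the sums. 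Since every nonzero $s\in C$ has all coordinates positive (because $a_js_j\ge b_j\sum_\ell s_\ell>0$), the meet of two nonzero elements is again a nonzero element of $C$, so $C\setminus\{0\}$ has a single minimal element $(r_1,\dots,r_n)$ and $J=(\prod_{i=1}^n z_i^{r_i})$. Assembling the three cases yields the stated description of $J$.
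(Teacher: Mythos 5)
Your proposal is correct, and for the two monomial cases it is essentially the paper's own argument: the paper likewise uses Lemma~\ref{special} to reduce to pure powers of $z_{n+1}$ when $\bb\in H^+$ and takes the least one, and when $\bb\in H^-$ it forms the componentwise minimum $w$ of the monomial generators and verifies $w\in J$ using precisely your inequality (a sum of minima is at most the minimum of the sums); your packaging of this as meet-closedness of the cone $C$, together with the observation that every nonzero element of $C$ has all coordinates strictly positive, is the same computation made structurally cleaner (the paper instead rules out the degenerate outcome by showing the associated Rees relation has coefficient $u\neq 1$ because $\bb\notin H$). The genuine divergence is the case $\bb\in H$: the paper observes that $F(I)\iso K[x_1^{a_1},\dots,x_n^{a_n},x_1^{b_1}\cdots x_n^{b_n}]$ is a domain, so $J$ is a height-one prime containing no monomials, and then argues that an irreducible minimal binomial generator generates a prime ideal which must equal $J$ for height reasons. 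You instead classify all exact monomial identities, note that they form a rank-one lattice generated by a primitive vector (primitive because this lattice is a kernel, hence saturated), and conclude principality from the divisibility of $z_{n+1}^{kr_{n+1}}-\prod_i z_i^{kr_i}$ by the $k=1$ binomial. Your route avoids Krull's principal ideal theorem and the domain property altogether, and it produces the generator explicitly, with $r_{n+1}=\lcm_i\bigl(a_i/\gcd(a_i,b_i)\bigr)$, whereas the paper's argument is shorter but leans on the toric structure of $F(I)$ on the hyperplane. The only terse points in your write-up --- that binomial generators of $J$ may be taken with coprime terms arising from exact identities, and that the relevant inequality systems actually admit solutions in the $H^+$ and $H^-$ cases --- are exactly as brief in the paper, and both follow readily from the Section~1 description of the Rees relations.
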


\begin{proof} Let $f=u\prod_{i=1}^{n+1}z_i^{r_i}-v\prod_{i=1}^{n+1}z_i^{s_i}$ be a binomial as in (\ref{relation}) that is a minimal generator of $L$.
Then $\gcd(\prod_{i=1}^{n+1}z_i^{r_i},  \prod_{i=1}^{n+1}z_i^{s_i})=1$. Hence we may assume  that
$f=u\prod_{i=t+1}^{n+1}z_i^{r_i}-v\prod_{i=1}^{t}z_i^{s_i}$.

Notice that  $f$  induces a non-zero  element in  $J$ if and only if $u=1$ or $v=1$, and all generators of $J$ are obtained in this way.

We claim: $r_{n+1}>0$.  Indeed, assume first that  $u=1$, then $f=\prod_{i=t+1}^{n+1}z_i^{r_i}-v\prod_{i=1}^{t}z_i^{s_i}$. If  $r_{n+1}=0$, then
$\prod_{i=t+1}^{n}x_i^{a_ir_i}=v\prod_{i=1}^{t}x_i^{a_is_i}$.  It follows that $v=(\prod_{i=t+1}^{n}x_i^{a_ir_i})v'$ for some monomial $v'$. This implies
that $1=v' \prod_{i=1}^tx_i^{a_is_i}$, so that $s_i=0$ for all $i$, a contradiction.   Similar argument for $v=1$.

Assume first that  $\bb\in H$. In this case  $F(I)\iso K[x_1^{a_1},\ldots,x_n^{a_n},\prod_{i=1}^nx_i^{b_i}]$, and hence $F(I)$ is a domain and $J$ is a
prime ideal of height $1$. Therefore,  $J$  does not contain any monomial generator, and so  $f= \prod_{i=t+1}^{n+1}z_i^{r_i}- \prod_{i=1}^tz_i^{s_i}$.
It follows that
$$
\prod_{i=1}^tx_i^{b_ir_{n+1}}\prod_{i=t+1}^{n}x_i^{a_ir_i+b_ir_{n+1}}=\prod_{i=1}^tx_i^{a_is_i}.
$$
This implies that $t=n$, so that $f= z_{n+1}^{r_{n+1}}-\prod_{i=1}^{n}z_i^{s_i}$.

Since $f$ is a minimal generator of $J$ and since $J$ is a prime ideal, it follows that $f$ is irreducible. Therefore,  the principal ideal $(f)\subset J$
is a prime ideal. Since $J$ is a prime ideal of height $1$,
this implies that  $J=(f)$.

Next we consider the case that  $\bb \in H^+$. In this case  we see that $u=1$ and $v\neq 1$. Then it follows that the generators of $J$ are induced from
binomials of the form $f=\prod_{i=t+1}^{n+1}z_i^{r_i}-v\prod_{i=1}^{t}z_i^{s_i}$  with  $r_{n+1}>0$.
This implies that  $J$ is generated by monomials of the form  $z_{n+1}^{r_{n+1}}\prod_{i=1}^nz_i^{r_i}$ with $r_{n+1}>0$.  By applying Lemma~\ref{special},
it follows that $J=(w_1,\ldots,w_m)$ with $w_j=z_{n+1}^{r_{n+1,j}}$ for $j=1,\ldots,m$. Let $r_{n+1}=\min\{r_{n+1,j}\:\; j=1,\ldots,m\}$. Then
$J=(z_{n+1}^{r_{n+1}})$.

Finally assume that $\bb\in H^-$. Then  $u\neq 1$ and $v=1$. In this case, all generators of $J$ are of the form $z_1^{t_1}\cdots z_n^{t_n}$. Say,
$J=(w_1,\ldots,w_m)$ with  $w_j=z_1^{t_{1,j}}\cdots z_n^{t_{n,j}}$ for $j=1,\ldots,m$. Let $w=z_1^{s_1}\cdots z_n^{s_n}$ with
$s_i=\min\{t_{i,1},\ldots,t_{i,m}\}$.  Then  $w_j=v_jw$ for $j=1,\ldots, m$ and monomials $v_j$ in $T$. We will show that $w\in J$. Then this implies
$J=(w)$.

Indeed, since $w_j\in J$,  there exists a monomial  $u_j\in S$ such that  $u_jz_{n+1}^{r_{n+1,j}}\prod_{i=t+1}^{n}z_i^{r_{i,j}}-w_j\in L$. Claim:
$r_{i,j}=0$  for all $i=t+1,\ldots,n$, $j=1,\ldots,m$.
Otherwise, there exist some  $i\in \{t+1,\ldots,n\}$ such that $r_{i,j}>0$. Without loss of generality, we may assume that $r_{i,j}>0$ for $i=t+1,\ldots,
n$. This implies that $u_j(\prod_{i=1}^{n}x_{i}^{b_ir_{n+1,j}})(\prod_{i=t+1}^{n}x_i^{a_ir_{i,j}})=\prod_{i=1}^{n}x_{i}^{a_{i}t_{i,j}}$. It follows that
$t_{i,j}>0$ for $i=t+1,\ldots, n$. Then $\gcd(u_jz_{n+1}^{r_{n+1,j}}\prod_{i=t+1}^{n}z_i^{r_{i,j}},\prod_{i=1}^{n}z_i^{t_{i,j}})\neq 1$, a contradiction.

Therefore, for any  $w_j\in J$,  there exists a monomial  $u_j\in S$ such that $u_jz_{n+1}^{r_{n+1,j}}-w_j\in L$,  where $r_{n+1,j}=\sum_{i=1}^nt_{i,j}$.
This is equivalent to saying that $b_i(\sum_{k=1}^nt_{k,j})\leq a_it_{i,j}$ for any  $i=1,\ldots,n$ and $j=1,\ldots,m$.  Now we have
\[
b_i(\sum_{k=1}^ns_k)\leq \min_{j=1,\ldots,m}\{ b_i(\sum_{k=1}^nt_{k,j})\}\le \min_{j=1,\ldots,m}\{a_it_{i,j}\}= a_is_i.
\]
This shows that there exists a monomial $u\in S$ such that  $uz_{n+1}^{r_{n+1}}-w\in L$, where $r_{n+1}=\sum_{k=1}^ns_k$. It remains to be shown that
$u\neq 1$. Then this implies that $w\in J$. Suppose  $u=1$. Then $b_ir_{n+1}=a_is_i$ for all $i$. This implies that $\sum_{i=1}^n\frac{b_i}{a_i}=1$.
Therefore, $\bb \in H$, a contradiction.
\end{proof}

\begin{Corollary}
\label{hyper}
With the assumptions and notation  of Theorem~\ref{$n+1$generators},  the fiber cone $F(I)$ is  a hypersurface ring. Moreover, $F(I)$ is a domain if and
only if $\bb\in H$. The fiber cone $F(I)$  has exactly one  non-zero minimal prime ideal if $\bb\in H^+$, and  it has precisely $n$ non-zero minimal prime
ideals if $\bb\in H^-$.
\end{Corollary}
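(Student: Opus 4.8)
The plan is to read off every assertion directly from the explicit description of $J$ furnished by Theorem~\ref{$n+1$generators}, since that theorem already does the heavy lifting. In all three cases $\bb\in H$, $\bb\in H^+$, $\bb\in H^-$, the ideal $J$ is \emph{principal}: it is generated by $z_{n+1}^{r_{n+1}}-\prod_{i=1}^n z_i^{r_i}$, by $z_{n+1}^{r_{n+1}}$, or by $\prod_{i=1}^n z_i^{r_i}$, respectively. Hence in each case $F(I)\iso T/J$ with $T=K[z_1,\dots,z_{n+1}]$ a polynomial ring in $n+1$ variables and $J$ a nonzero principal ideal. Since $T$ is regular and $J$ is generated by a single nonzerodivisor (any nonzero element of a domain is a nonzerodivisor), $F(I)$ is by definition a hypersurface ring. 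This settles the first sentence.

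For the domain claim, I would argue as follows. If $\bb\in H$, then $F(I)\iso K[x_1^{a_1},\dots,x_n^{a_n},\prod_i x_i^{b_i}]$ is a subalgebra of the polynomial ring $S$, hence a domain; this is exactly the identification already used in the proof of Theorem~\ref{$n+1$generators}. Conversely, if $\bb\in H^+$ then $J=(z_{n+1}^{r_{n+1}})$, so the image of $z_{n+1}$ is a nonzero nilpotent element of $F(I)$ (note $r_{n+1}>0$, as established in the theorem's proof), and $F(I)$ is not reduced; likewise if $\bb\in H^-$ then $J=(\prod_i z_i^{r_i})$ is a monomial ideal with more than one variable involved, so $F(I)$ has zerodivisors. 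Thus $F(I)$ is a domain precisely when $\bb\in H$.

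The minimal-prime count is a routine computation of $\Min(J)$ from the monomial generator. When $\bb\in H^+$, $J=(z_{n+1}^{r_{n+1}})$ has radical $(z_{n+1})$, which is prime, giving exactly one nonzero minimal prime. When $\bb\in H^-$, $J=(\prod_{i=1}^n z_i^{r_i})$ has radical $\bigcap_{i}(z_i)$ taken over those $i$ with $r_i>0$, so the minimal primes of $F(I)$ are the $(z_i)$ for which $r_i>0$. The only point requiring verification is that \emph{all} $r_i$ are strictly positive in the $H^-$ case, so that the count is exactly $n$. I expect this to be the main (and only real) obstacle: it must be shown that the single generator $w=\prod_{i=1}^n z_i^{s_i}$ produced at the end of the proof of Theorem~\ref{$n+1$generators} has $s_i>0$ for every $i$. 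This follows because $w\in L$ forces $b_i r_{n+1}\le a_i s_i$ for all $i$ with $r_{n+1}=\sum_k s_k>0$; since each $b_i>0$, the left-hand side is strictly positive, whence $s_i>0$ for every $i=1,\dots,n$. Therefore exactly $n$ distinct primes $(z_1),\dots,(z_n)$ occur, completing the proof. \qed
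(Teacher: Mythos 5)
Your overall route is the right one, and it is exactly what the paper intends: the paper gives no separate proof of Corollary~\ref{hyper}, treating it as immediate from the explicit description of $J$ in Theorem~\ref{$n+1$generators}, and your reading-off of the hypersurface property, the domain criterion, and the minimal primes follows that line. The genuinely nontrivial point is the case $\bb\in H^-$, and you handle it correctly: the relation $uz_{n+1}^{r_{n+1}}-w\in L$ from the theorem's proof forces $b_i r_{n+1}\le a_i s_i$ for all $i$, and since $b_i>0$ and $r_{n+1}=\sum_k s_k>0$, every $s_i$ is strictly positive, so the minimal primes are precisely $(z_1),\dots,(z_n)$. This is the detail the paper leaves implicit, and your proposal supplies it.

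There is, however, one step whose justification as written does not suffice. In the case $\bb\in H^+$ you claim the image of $z_{n+1}$ is a \emph{nonzero} nilpotent (and, for the count, that the minimal prime $(z_{n+1})/J$ is \emph{nonzero}), citing only $r_{n+1}>0$. That inequality allows $r_{n+1}=1$, in which case $J=(z_{n+1})$, the image of $z_{n+1}$ is zero, $F(I)\iso K[z_1,\dots,z_n]$ is a domain, and its unique minimal prime is $(0)$ --- so both the non-reducedness claim and the ``one nonzero minimal prime'' claim would collapse. The missing observation is that $J$ contains no linear forms: since $u_1,\dots,u_{n+1}$ minimally generate $I$, one has $u_{n+1}\notin \mm I$, so $z_{n+1}\notin J$ and hence $r_{n+1}\ge 2$. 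With that one line your argument is complete. (A similar implicit point in the $H^-$ case --- that more than one variable occurs in the generator --- is already covered by your positivity argument together with the fact $n\ge 2$, which holds because for $n=1$ the monomial $x_1^{b_1}$ would divide $x_1^{a_1}$ and $I$ would not be minimally generated by $n+1$ elements.)
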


The last sentence of the statement of Corollary \ref{hyper} also follows from \cite[Corollary 4.6]{S}. This corollary implies in particular, that $\depth F(I)=2$, if $I\subset K[x,y]$ is a  non principal monomial ideal  of $\height 2$ generated by 3 elements. There exist  some
examples of non principal monomial ideals $I\subset K[x,y]$  of $\height 2$ with more than 4 generators and $\depth F(I)=0$.

\medskip
\section{Symmetric ideals}
In the following, we study in more detail those symmetric ideals $I$ with $\mu(I)=4$.
We fix the following   notation.  Let $0<a<b<c$ be integers with $\gcd(a,b,c)=1$.
Then we define the symmetric ideal $I=(x^c,x^by^a,x^ay^b,y^c)$. We let $T= K[z_1,\ldots,z_4]$ be the polynomial ring,  and let   $J$ be the kernel of the
canonical map $T\to F(I)$ with  $z_i\mapsto u_i$ for $i=1, \ldots,4$,   where    $u_1=x^c$, $u_2=x^by^a$, $u_3=x^{a}y^{b}$ and $u_4=y^c$.

\begin{Theorem}\label{symmetricidealup}
Let $I=(x^c,x^by^a,x^ay^b,y^c)\subset K[x,y]$ be a symmetric monomial ideal with $b+a>c$. Then we have
\begin{enumerate}
\item[ (a)] The ideal $J$ is minimally generated as given in  one of the following four cases, and each of these cases occur.
\begin{enumerate}
\item[(i)] $J=(z_2z_3, z_2^r, z_3^r)$;
\item[(ii)] $J=(z_2z_3, z_2^r, z_3^r)+L$, where $L$ is minimally generated by monomials of the form $z_1^iz_3^j$ and $z_2^jz_4^i$ with $i,j>0$;
\item[(iii)] $J=(z_2z_3, z_2^{m+1}, z_3^{m+1}, z_1^\ell z_3^m-z_2^mz_4^\ell)$, where
  $\ell=\frac{b-a}{\gcd(c,b-a)}$ and\\ $m=\frac{c}{\gcd(c,b-a)}$.
\item[(iv)] $J=(z_2z_3, z_2^{r}, z_3^{r}, z_1^\ell z_3^m-z_2^mz_4^\ell)+L'$, where $L'$ is minimally generated by monomials of the form $z_1^{i'}z_3^{j'}$ and $z_2^{j'}z_4^{i'}$ with $i',j'>0$,
  $\ell=\frac{b-a}{\gcd(c,b-a)}$ and  $m=\frac{c}{\gcd(c,b-a)}$.
\end{enumerate}
\item[(b)] $F(I)$ is Cohen--Macaulay  if and only if $J$ is generated as in case {\em (i)}. Otherwise,  $\depth F(I)=1$.
\end{enumerate}
\end{Theorem}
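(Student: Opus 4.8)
The plan is to determine the defining ideal $J$ completely and then read off the depth. By the discussion in Section~1, $J$ is generated by monomials in the $z_i$ together with homogeneous binomials, the binomials being exactly those $\prod z_i^{r_i}-\prod z_i^{s_i}$ for which $\prod u_i^{r_i}=\prod u_i^{s_i}$ in $S$. Such binomials correspond to the kernel of the map $\ZZ^4\to\ZZ^3$ sending $e_1\mapsto(c,0,1)$, $e_2\mapsto(b,a,1)$, $e_3\mapsto(a,b,1)$, $e_4\mapsto(0,c,1)$. First I would check that this kernel has rank $1$ and is generated by $(\ell,-m,m,-\ell)$ with $\ell=(b-a)/\gcd(c,b-a)$ and $m=c/\gcd(c,b-a)$; hence the only binomial that can occur, modulo the monomial part of $J$, is $g=z_1^\ell z_3^m-z_2^m z_4^\ell$. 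Moreover $g$ is a minimal generator precisely when $z_1^\ell z_3^m\notin J$: its two terms lie in $J$ simultaneously, since their difference does, so $g$ becomes redundant exactly when they do.

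Next I would pin down the monomial generators. A monomial $\prod z_i^{r_i}$ lies in $J$ if and only if $\prod u_i^{r_i}\in\mm I^{\sum r_i}$, which amounts to the existence of a degree-$(\sum r_i)$ generator of $I^{\sum r_i}$ that properly divides $\prod u_i^{r_i}$; this is a system of linear inequalities in $a,b,c$. From $a+b>c$ one gets $u_2u_3=x^{a+b-c}y^{a+b-c}\,u_1u_4$, so $z_2z_3\in J$; consequently every monomial divisible by $z_2z_3$ is redundant, and the pure powers $z_2^r,z_3^r$, with the same minimal exponent $r$ by the $x\leftrightarrow y$ symmetry swapping $u_1\leftrightarrow u_4$, $u_2\leftrightarrow u_3$, are generators. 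A support analysis, showing that no monomial supported on $\{z_1,z_2\}$, on $\{z_3,z_4\}$, or on $\{z_1,z_4\}$ can lie in $J$, leaves the mixed monomials $z_1^iz_3^j$ and $z_2^jz_4^i$ ($i,j>0$) as the only further candidates. Deciding which of these actually lie in $J$ and are not multiples of $z_3^r$, $z_2^r$, or (when $g$ is redundant) of the two terms of $g$, produces the ideals $L,L'$ and separates the four cases; in the case where $g$ is a minimal generator and no extra monomials occur one computes $r=m+1$, giving (iii). Showing that each of the four cases is nonempty reduces to exhibiting explicit triples $(a,b,c)$.

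For part~(b) I would argue through the nilradical. Since $\height I=2$ in $K[x,y]$, the analytic spread is $2$, so $\dim F(I)=2$. As $z_2^r=z_3^r=0$ in $F(I)$, the elements $z_2,z_3$ are nilpotent, $F(I)$ is module-finite over the polynomial subring $B=K[z_1,z_4]$, the nilradical is $N=(z_2,z_3)F(I)$, and $F(I)/N\cong B$. Writing $\Mm=(z_1,z_2,z_3,z_4)$ for the graded maximal ideal and using that $H^0_{\Mm}(B)=H^1_{\Mm}(B)=0$, the local cohomology sequence of $0\to N\to F(I)\to B\to 0$ gives $H^i_{\Mm}(F(I))\cong H^i_{\Mm}(N)$ for $i=0,1$, hence $\depth F(I)=\depth_B N$; since $B$ is regular of dimension $2$, $F(I)$ is Cohen--Macaulay if and only if $N$ is a free $B$-module. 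In case~(i) the residues of $1,z_2,\dots,z_2^{r-1},z_3,\dots,z_3^{r-1}$ form a $B$-basis of $F(I)$, so $N$ is $B$-free and $F(I)$ is Cohen--Macaulay. In the remaining cases $N$ is not free: an extra monomial $z_1^iz_3^j\in J$ makes $z_3^j$ a $(z_1)$-torsion element, while the binomial relation $z_1^\ell z_3^m=z_4^\ell z_2^m$ forces a non-unimodular $B$-syzygy between $z_2^m$ and $z_3^m$ (its cokernel is isomorphic to the ideal $(z_1^\ell,z_4^\ell)$, of depth $1$). In either case the associated primes of $N$ over $B$ have height at most $1$, so $H^0_{\Mm}(N)=0$ while $N$ fails to be free; thus $\depth_B N=1$, whence $\depth F(I)=1$ and $F(I)$ is not Cohen--Macaulay.

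The main obstacle is the explicit monomial computation in the second step: translating the membership $\prod u_i^{r_i}\in\mm I^{\sum r_i}$ into inequalities, solving them to find the exact minimal exponent $r$ and the precise list of monomials $z_1^iz_3^j$, $z_2^jz_4^i$ lying in $J$, and proving that these together with $z_2z_3$ and $g$ exhaust a minimal generating set. Everything downstream, namely the split into the four cases and the depth dichotomy via freeness of $N$, is then essentially formal.
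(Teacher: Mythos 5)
Your overall architecture is sound, and part (b) of your plan is genuinely different from the paper's: the paper rules out Cohen--Macaulayness via Hilbert--Burch (if $\mu(J)=t\ge 4$ and $F(I)$ were CM, the generators of $J$ would be the maximal minors of a $(t-1)\times t$ matrix with entries of positive degree, which is impossible since the degree-two element $z_2z_3$ is one of those minors), and it proves $\depth F(I)>0$ by writing $J$ (or $\ini_<(J)$, after an explicit Gr\"obner basis computation) as an intersection $J_1\cap J_2$ where $z_4$ is absent from the generators of $J_1$ and $z_1$ from those of $J_2$, and embedding $T/J\hookrightarrow T/J_1\oplus T/J_2$. Your route through the nilradical $N=(z_2,z_3)F(I)$, the finite extension $B=K[z_1,z_4]\subset F(I)$, and the identification $H^i(F(I))\cong H^i(N)$ for $i\le 1$ is correct, your case (i) is complete, and your case (iii) parenthetical is in fact a full argument for that case, since it gives $N\cong B^{2m-2}\oplus(z_1^\ell,z_4^\ell)$ up to shifts, of depth exactly $1$. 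The genuine gap is in cases (ii) and (iv): there everything rests on the sentence ``the associated primes of $N$ over $B$ have height at most $1$,'' i.e.\ $(z_1,z_4)\notin\Ass_B N$, for which you give no argument. That claim is precisely equivalent to $\depth F(I)>0$, which is the hard point of part (b); it is not formal, because the torsion you exhibit (an element killed by a power of $z_1$) is a priori perfectly compatible with the existence of an element killed by powers of both $z_1$ and $z_4$. You must supply an argument of the paper's kind here --- the $J=J_1\cap J_2$ splitting for the monomial case (ii), and a Gr\"obner degeneration followed by the same splitting for case (iv), or a direct check that no standard monomial has all four of its $z_i$-multiples in $J$.

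There is also a smaller gap in part (a). Excluding monomials supported on $\{z_1,z_2\}$, $\{z_3,z_4\}$ and $\{z_1,z_4\}$ does not leave only $z_1^iz_3^j$ and $z_2^jz_4^i$ as candidates: membership of a monomial in $J$ is not detected on sub-supports, so you must separately rule out pure powers $z_1^r,z_4^r$ and, more importantly, minimal generators of the form $z_1^iz_2^jz_4^k$ and $z_1^iz_3^jz_4^k$ with three positive exponents. The paper does this (its type (4) analysis): such a monomial forces a Rees relation against $vz_3^{r}$, giving $ci+bj\ge ar$ and $aj+ck\ge br$ with $r=i+j+k$, whence $i+k\ge\frac{a+b}{c}(i+k)>i+k$, a contradiction. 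This is an omitted case rather than a wrong method, and your inequality framework handles it. On the positive side, your lattice description of the binomial part of $J$ --- the kernel of $\ZZ^4\to\ZZ^3$ has rank one because the minor on columns $1,2,4$ equals $c(a+b-c)\neq 0$, and is generated by the primitive vector $(\ell,-m,m,-\ell)$ --- is a clean conceptual replacement for the paper's direct elimination showing that any binomial generator $z_1^iz_3^j-z_2^jz_4^i$ satisfies $ci=(b-a)j$ and hence is divisible by $z_1^\ell z_3^m-z_2^mz_4^\ell$.
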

\begin{proof}
(a) We first show that the monomials $z_2z_3, z_2^r, z_3^r$ belong to $J$ in all four   cases. Indeed, we have
 $u_2u_3=x^{a+b-c}y^{a+b-c}u_1u_4$, which implies that $z_2z_3\in J$.

Since   $b+a>c$, we have $\frac{a}{c-a}>\frac{c-b}{b}$. Thus there exist   positive integers $m$ and $n$ such that
\begin{eqnarray}
\label{essen}\frac{c-b}{b}m\leq n\leq \frac{a}{c-a}m.
\end{eqnarray}
This implies that  $br\geq cm$ and $ar\geq cn$, where $r=m+n$. Moreover one of the two inequalities must be strict, because in (\ref{essen}) one of the two
inequalities must be strict.  This implies that $z_2^{r}-vz_1^{m}z_4^{n}$ is a relation of $R(I)$ with a monomial $v\neq 1$. Hence $z_2^r \in J$. Of course
we may assume that $r$ is the smallest integer with $z_2^r\in J$. By symmetry we also have $z_3^r\in J$.

Case (i) can happen for example, when  $I=(x^4,x^3y^2,x^2y^3, y^4)$. In this example,  $J=(z_2z_3, z_2^2, z_3^2)$.

Next we show that if $J$ is generated by monomials, then $J$ must be of the form (i) or (ii). Indeed, let $u$ be any other monomial generator of $J$ which
is not of the form given in (ii).  Then $u=z_1^{i}z_2^{j}z_4^{k}$ with $i>0$ or $u=z_1^{i}z_3^{j}z_4^{k}$ with $k>0$. By symmetry it is enough to show that
$u=z_1^{i}z_2^{j}z_4^{k}$ with $i>0$ is not a minimal generator of $J$.

More generally  we show that elements of the following type  do not belong to any minimal set of generators of $J$:
(1) $z_1^{i}z_2^{j}z_3^{k}$, $z_2^{i}z_3^{j}z_4^{k}$ with $i,j,k>0$; (2) $z_1^{r}$, $z_4^{r}$, $z_1^{r}-z_2^{i}z_3^{j}z_4^{k}$,
$z_4^{r}-z_1^{i}z_2^{j}z_3^{k}$; (3) $z_1^{i}z_2^{j}$,   $z_3^{k}z_4^{\ell}$, $z_1^{i}z_2^{j}-z_3^{k}z_4^{\ell}$ with $i,\ell>0$;
(4) $z_1^{i}z_2^{j}z_4^{k}$ with $i>0$,    $z_1^{i}z_3^{j}z_4^{k}$ with $k>0$, $z_3^{r}-z_1^{i}z_2^{j}z_4^{k}$, $z_2^{r}-z_1^{i}z_3^{j}z_4^{k}$;
(5) $z_1^{i}z_4^{j}$,  $z_1^{i}z_4^{j}-z_2^{k}z_3^{\ell}$ with $i,j>0$.

Indeed, (1)  follows from the fact  $z_2z_3\in J$.

By symmetry,   type (2) reduces to $z_1^{r}$, $z_1^{r}-z_2^{i}z_3^{j}z_4^{k}$.  This  can only happen when
  there exists some relation $f=z_1^{r}-vz_2^{i}z_3^{j}z_4^{k}$ in $R(I)$ with a monomial $v\in K[x,y]$. Then $u_{1}^{r}=vu_2^iu_3^ju_4^k$, that is,
  $x^{cr}=vx^{bi+aj}y^{ai+bj+ck}$. This implies  that $x$ divides  $y$, a contradiction.

By symmetry,   type (3) reduces to   $z_1^{i}z_2^{j}$,   $z_1^{i}z_2^{j}-z_3^{k}z_4^{\ell}$ with $i,\ell>0$.  This  can only happen if there exists a
relation $f=z_1^{i}z_2^{j}-vz_3^{k}z_4^{\ell}$ in $R(I)$ with a monomial $v\in K[x,y]$. Then $u_1^{i}u_2^{j}=vu_3^{k}u_4^{\ell}$, that is,
$x^{ci+bj}y^{aj}=vx^{ak}y^{bk+c\ell}$. This implies  that $ci+bj\geq ak$, $aj\geq bk+c\ell$, $i+j=k+\ell$,
and the equalities hold if and only if $v=1$. It follows  that $i-\ell\geq \frac{a+b}{c}(k-j)>k-j$, a contradiction.

Also by symmetry,   type (4) reduces to $z_1^{i}z_2^{j}z_4^{k}$,  $z_3^{r}-z_1^{i}z_2^{j}z_4^{k}$  with $i>0$.  This  can only happen if there exists some relation
$f=z_1^{i}z_2^{j}z_4^{k}-vz_3^{r}$ in $R(I)$ with a monomial $v\in K[x,y]$. This implies that $ci+bj\geq ar$, $aj+ck\geq br$, $i+j+k=r$,
and the above equalities hold if and only if $v=1$. This implies that $i+k\geq \frac{a+b}{c}(r-j)>r-j$, a contradiction.

 For type (5),  if $z_1^{i}z_4^{j}$ is a  minimal generator of  $J$, then  there exists a relation  $f=z_1^{i}z_4^{j}-vz_2^kz_3^{\ell}$  in $R(I)$ with a
 monomial $v\neq 1$. It follows that $ci\geq bk+a\ell$, $cj\geq ak+b\ell$ and $i+j=k+\ell$. This implies that $i+j\geq \frac{a+b}{c}(k+\ell)>k+\ell$, a
 contradiction.

 If $z_1^{i}z_4^{j}-z_2^{k}z_3^{\ell} \in J$, then $ci=bk+a\ell$, $cj=ak+b\ell$ and $i+j=k+\ell$. It follows that $i+j=0$,
 a contradiction.

Case (ii) also can happen for example, when  $I=(x^{10},x^9y^2,x^2y^9, y^{10})$. In this example,  $J=(z_2z_3, z_2^5, z_3^5,z_1z_3^{4},z_2^{4}z_4)$.

This discussion shows that $J$ is indeed of the form (i) or (ii), when $J$ does not contain a binomial generator.

Finally we show that if the minimal generators of $J$ contain some  binomials, then $J$ must be of the form (iii)  or (iv). Indeed, by the above discussions, we
know that the   possible forms of monomial generators and binomial generators are $z_1^iz_3^j$, $z_2^kz_4^{\ell}$ with $i,j,k,\ell>0$ and
$z_1^{i}z_3^{j}-z_2^{k}z_4^{s}$ with $i,j>0$  respectively.

If $z_1^{i}z_3^{j}-z_2^{k}z_4^{s}$ with $i,j>0$ is a minimal generator of  $J$, then $ci+aj=bk$, $bj=ak+cs$ and $i+j=k+s$.
It follows that $i=s$ and $k=j$. In this case, we obtain that $ci=(b-a)j$, that is  $\frac{c}{\gcd(c,b-a)}i=\frac{b-a}{\gcd(c,b-a)}j$. This yields that
$j=\frac{c}{\gcd(c,b-a)}t$ and $i=\frac{b-a}{\gcd(c,b-a)}t$ for some integer $t$. Therefore, $z_1^{\ell}z_3^{m}-z_2^{m}z_4^{\ell}$ divides
$z_1^{i}z_3^{j}-z_2^{j}z_4^{i}$, where $\ell=\frac{b-a}{\gcd(c,b-a)}$ and $m=\frac{c}{\gcd(c,b-a)}$.

Cases (iii) and (iv)  may actually occur.  For example, when  $I=(x^{10},x^8y^3,x^3y^8, y^{10})$, then   $J=(z_2z_3, z_2^3, z_3^3,z_1z_3^{2}-z_2^{2}z_4)$;
 when  $I=(x^{30},x^{29}y^2,x^2y^{29}, y^{30})$, then   $J=(z_2z_3, z_2^{15}, z_3^{15},z_1^9z_3^{10}-z_2^{10}z_4^9,z_1z_3^{14},z_2^{14}z_4,z_1^3z_3^{13},z_2^{13}z_4^3,z_1^5z_3^{12},z_2^{12}z_4^5,z_1^7z_3^{11},z_2^{11}z_4^7)$.

To conclude the  proof of part (a) of  the theorem,  we show that the degree $r$ of the pure power generators  $z_2^{r}$, $z_3^{r}$  of $J$  in case (iii) must be
$m+1$.
It is clear  that $r\geq m+1$. If $r>m+1$, then $z_1^{\ell}z_3^{m+1}=z_3(z_1^{\ell}z_3^{m}-z_2^{m}z_4^{\ell})+z_2^{m-1}z_4^{\ell}(z_2z_3)\in J$,
contradicting the fact that no monomial different from  $z_2z_3, z_2^r, z_3^r$ belongs to a minimal set of generators of $J$.

(b) It is clear that in case (i),  $z_1$ and $z_4$ are nonzero divisors of $F(I)$. So in  this  case $F(I)$ is Cohen--Macaulay.

Next we show that   if $J$ is generated by more than three elements, then  $F(I)$ is not Cohen--Macaulay. This then shows that $F(I)$ is not
Cohen--Macaulay in the cases (ii), (iii)  and (iv).

Indeed, suppose that $\mu(J)=t\geq 4$, and that $F(I)$  is Cohen--Macaulay. Since $\height J=2$, the generators of $J$ are the maximal minors of a
$(t-1)\times t$-matrix $A$  whose entries are homogeneous polynomials of positive degree, see for example \cite[Theorem 1.4.17]{BH}. In  all four cases one of the generators is $z_2z_3$, which is  a maximal minor $A$. This implies that $t\leq 3$, a contradiction.

Finally we show that $\depth F(I)=1$ in the cases (ii),  (iii)  and (iv). It is enough to  show that $\depth F(I)>0$ in these three cases. In case (ii),  we have $J=J_1\sect J_2$,  where $J_1=(z_2,z_3 ^r,z_1^iz_3^j,\ldots)$ and
$J_2=(z_3, z_2^r,z_2^jz_4^i,\ldots)$. For $J_1$, the element $z_4$ does not appear in the support of the generators of $J_1$, and for $J_2$ it is $z_1$.
Thus $\depth T/J_1>0$ and $\depth T/J_2>0$.  Since there is a natural injective map   $F(I)=T/J\to T/J_1\dirsum T/J_2$, we see that $\depth F(I)>0$.

In case (iii),  by considering $S$-pairs,  one immediately sees that the elements
$z_2z_3, z_2^{m+1}, z_3^{m+1}, z_1^\ell z_3^m-z_2^mz_4^\ell$  form  a Gr\"obner basis with respect to the lexicographic order induced by $z_1>z_2>z_3>z_4$.
Hence $\ini_<(J)=(z_2z_3, z_2^{m+1}, z_3^{m+1}, z_1^\ell z_3^m)$. Since $z_4$ does not appear in the support of any monomial of  $\ini_<(J)$, we see that
$z_4$ is a nonzero-divisor on $T/\ini_<(J)$, and hence $\depth T/\ini_<(J)>0$. Quite generally one always has   $\depth T/\ini_<(J)\leq \depth T/J$. Thus
$\depth T/J>0$.

 Similar to case (iii), in case (iv), one  sees that the elements
$z_2z_3, z_2^{r}, z_3^{r}, z_1^\ell z_3^m-z_2^mz_4^\ell, z_1^{i'}z_3^{j'}, z_2^{j'}z_4^{i'},\ldots$  form  a Gr\"obner basis with respect to the lexicographic order induced by $z_1>z_2>z_3>z_4$.
Hence $\ini_<(J)=(z_2z_3, z_2^{r}, z_3^{r}, z_1^\ell z_3^m, z_1^{i'}z_3^{j'}, z_2^{j'}z_4^{i'},\ldots)$. We obtain that $\ini_<(J)=J'\sect J''$, where $J'=(z_2,z_3 ^r,z_1^\ell z_3^m,z_1^{i'}z_3^{j'},\ldots)$ and
$J''=(z_3, z_2^r, z_2^{j'}z_4^{i'},\ldots)$. For $J'$, the element $z_4$ does not appear in the support of the generators of $J'$, and for $J''$ it is $z_1$.
Thus $\depth T/J'>0$ and $\depth T/J''>0$.  Since there is a natural injective map   $T/\ini_<(J)\to T/J'\dirsum T/J''$, we see that $\depth T/\ini_<(J)>0$.
 Thus
$\depth T/J>0$ because of one always has   $\depth T/\ini_<(J)\leq \depth T/J$.
 \end{proof}

Next, we consider the case $b+a<c$. In this case, we have $\frac{1}{a}>\frac{1}{c-b}$. Thus there exist positive integers $i$ and $j$
such that
\begin{eqnarray}
\label{Iranfood}\frac{b-a}{c-b}j\leq i\leq \frac{b-a}{a}j.
\end{eqnarray}
Let $\Sc=\{(i,j)\in \mathbb{N}^{2}\mid \frac{b-a}{c-b}j\leq i\leq \frac{b-a}{a}j\}$ and  $\leq$ be the  partial ordering on $\Sc$ with $(i,j)\leq
(i',j')$ if and only if $i\leq i'$, $j\leq j'$.

\begin{Theorem}\label{symmetricidealdown}
Let $I=(x^c,x^by^a,x^ay^b,y^c)\subset K[x,y]$ be a symmetric monomial ideal with $b+a<c$. Let $(i,j)$ be the minimal element of the poset $\Sc$.   Then we
have:
\begin{enumerate}
\item[ (a)]  The ideal $J$ is minimally generated as given in   one of the following four cases, and each of these cases occur.
\begin{enumerate}
\item[(i)] $J=(z_1z_4, z_1^{i}z_3^{j}, z_2^{j}z_4^{i})$;
\item[(ii)] $J=(z_1z_4, z_1^{i}z_3^{j}, z_2^{j}z_4^{i})+L$, where $L$ is minimally generated by monomials of the forms $z_1^{i}z_3^{j}$  and
    $z_2^{j}z_4^{i}$  with $i,j>0$;
\item[(iii)] $J=(z_1z_4, z_1^{i}z_3^{j}, z_2^{j}z_4^{i}, z_1^\ell z_3^m-z_2^mz_4^\ell)$, where  $l=\frac{b-a}{\gcd(c,b-a)}$ and
    $m=\frac{c}{\gcd(c,b-a)}$;
 \item[(iv)] $J=(z_1z_4, z_1^{i}z_3^{j}, z_2^{j}z_4^{i}, z_1^\ell z_3^m-z_2^mz_4^\ell)+L'$, where  where $L'$ is minimally generated by monomials of the forms $z_1^{i'}z_3^{j'}$  and
    $z_2^{j'}z_4^{i'}$  with $i',j'>0$, $l=\frac{b-a}{\gcd(c,b-a)}$ and
    $m=\frac{c}{\gcd(c,b-a)}$.
    \end{enumerate}
\item[(b)] $F(I)$ is Cohen--Macaulay  if and only if $J$ is generated as in case {\em (i)}. Otherwise, $\depth F(I)=1$.
\end{enumerate}
\end{Theorem}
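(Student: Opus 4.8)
The plan is to mirror the proof of Theorem~\ref{symmetricidealup}, exploiting the fact that the hypothesis $b+a<c$ merely interchanges the roles played by the pairs $\{z_2,z_3\}$ and $\{z_1,z_4\}$ in the case $b+a>c$. Indeed, now $u_1u_4=x^cy^c=x^{c-a-b}y^{c-a-b}u_2u_3$ with $c-a-b>0$, so $u_1u_4\in\mm I^2$ and hence $z_1z_4\in J$; this degree-two monomial will play exactly the role that $z_2z_3$ played before. As in the previous proof, I would first exhibit the ``base'' generators of $J$, then show that no monomial or binomial outside the listed shapes can lie in a minimal generating set, and finally record the unique binomial generator and organize the outcome into the four cases.

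For the base monomials, the key observation is that for $(i,j)\in\NN^2$ one has $u_2^{\,i+j}\mid u_1^{\,i}u_3^{\,j}$ as monomials precisely when $b(i+j)\le ci+aj$ and $a(i+j)\le bj$, i.e.\ precisely when $\frac{b-a}{c-b}j\le i\le\frac{b-a}{a}j$, which is the defining condition of the poset $\Sc$ in (\ref{Iranfood}). When $i>0$ this divisibility is strict (because $a+b<c$), so $u_1^{\,i}u_3^{\,j}\in\mm I^{\,i+j}$ and therefore $z_1^{\,i}z_3^{\,j}\in J$; applying the symmetry $x\leftrightarrow y$, $z_1\leftrightarrow z_4$, $z_2\leftrightarrow z_3$ gives $z_2^{\,j}z_4^{\,i}\in J$ as well. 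Taking $(i,j)$ to be the minimal element of $\Sc$ produces the generators of case (i); here I would first check that $\Sc$ really has a least element, by noting that $\frac{b-a}{c-b}<\frac{b-a}{a}$ (again because $a+b<c$) forces the rational cone $\{\,\frac{b-a}{c-b}\le i/j\le\frac{b-a}{a}\,\}$ either to meet the diagonal, in which case $(1,1)$ is least, or to lie strictly on one side of it, in which case the boundary lattice point of smallest $j$ is $\leq$ every point of $\Sc$.

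The heart of the argument, and the step I expect to be the main obstacle, is the classification in part (a): showing that apart from $z_1z_4$, the monomials of shape $z_1^{\,i'}z_3^{\,j'}$ and $z_2^{\,j'}z_4^{\,i'}$ ($i',j'>0$), and the single primitive binomial, nothing else occurs in a minimal generating set. This I would carry out exactly as in Theorem~\ref{symmetricidealup}, enumerating the possible shapes of a generator coming from (\ref{relation}) and ruling out the spurious ones (the $z_2z_3$-triples, the pure powers $z_1^{\,r},z_4^{\,r},z_2^{\,r},z_3^{\,r}$, and the mixed forms) by the same kind of degree comparison, now using $\frac{a+b}{c}<1$ in place of $\frac{a+b}{c}>1$ to produce the contradictions. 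The monomials forming $L$ are precisely the further minimal generators of shape $z_1^{\,i'}z_3^{\,j'}$, $z_2^{\,j'}z_4^{\,i'}$ that are \emph{not} divisible by $z_1^{\,i}z_3^{\,j}$, $z_2^{\,j}z_4^{\,i}$; these arise because $u_1^{\,i'}u_3^{\,j'}$ can fail to be a minimal generator of $I^{\,i'+j'}$ through divisibility by a generator other than a power of $u_2$, so their exponent pairs need not lie in $\Sc$. This accounts for cases (ii) and (iv). For the binomial part one computes, as before, that the lattice of relations is generated by $(\ell,-m,m,-\ell)$ with $\ell=\frac{b-a}{\gcd(c,b-a)}$ and $m=\frac{c}{\gcd(c,b-a)}$, so the only possible binomial generator is $z_1^\ell z_3^m-z_2^mz_4^\ell$; this yields cases (iii) and (iv). Exhibiting explicit ideals realizing each of the four cases then completes part (a).

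For part (b) the Cohen--Macaulay characterization is again a clean mirror. In case (i) the three generators $z_1z_4,\ z_1^{\,i}z_3^{\,j},\ z_2^{\,j}z_4^{\,i}$ are, up to sign, the maximal minors of the $3\times 2$ matrix $\begin{pmatrix} z_1^{\,i-1}z_3^{\,j} & z_2^{\,j}z_4^{\,i-1}\\ -z_4 & 0\\ 0 & -z_1\end{pmatrix}$, so the Hilbert--Burch theorem (\cite[Theorem 1.4.17]{BH}) applies since $\height J=2$, and $F(I)$ is Cohen--Macaulay. Conversely, if $\mu(J)=t\ge 4$ and $F(I)$ were Cohen--Macaulay, then, as $\height J=2$, the generators of $J$ would be the maximal minors of a $(t-1)\times t$ matrix with entries of positive degree; but $z_1z_4$ is such a minor and has degree $2$, whereas any maximal minor of a matrix with positive-degree entries has degree at least $t-1$, forcing $t\le 3$, a contradiction. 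Finally, for cases (ii)--(iv) it suffices to prove $\depth F(I)>0$, since $\dim F(I)=2$ and $F(I)$ is not Cohen--Macaulay force $\depth F(I)\le 1$. In case (ii), $J$ is a monomial ideal and I would write $J=J_1\sect J_2$, where no generator of $J_1$ involves $z_4$ and no generator of $J_2$ involves $z_1$, so the injection $F(I)=T/J\to T/J_1\dirsum T/J_2$ yields $\depth F(I)>0$. In cases (iii) and (iv) I would first verify, by checking $S$-pairs, that the displayed generators form a Gr\"obner basis for the lexicographic order induced by $z_1>z_2>z_3>z_4$, so that $\ini_<(J)$ is a monomial ideal containing $z_1z_4$; splitting off this product gives $\ini_<(J)=J'\sect J''$ with $z_3$ absent from the generators of one component and $z_2$ absent from the other, whence $\depth T/\ini_<(J)>0$ and therefore $\depth T/J\ge \depth T/\ini_<(J)>0$, exactly as in Theorem~\ref{symmetricidealup}.
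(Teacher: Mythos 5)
Your route is the paper's own: part (a) by rerunning the proof of Theorem~\ref{symmetricidealup} with the roles of $\{z_2,z_3\}$ and $\{z_1,z_4\}$ interchanged (which is all the paper itself says about (a)), and the Cohen--Macaulay criterion in (b) via Hilbert--Burch in both directions; your $3\times 2$ matrix is correct (the paper's printed matrix even has an index typo that yours fixes), and your degree bound forcing $\mu(J)\le 3$ is the paper's argument. The problems are in the depth claims for cases (ii)--(iv), which the paper does not prove for this theorem (its proof of (b) covers only the Cohen--Macaulay equivalence) and which you attempt to supply. Your case (ii) step is false as stated: there is no decomposition $J=J_1\cap J_2$ in which no generator of $J_1$ involves $z_4$ and no generator of $J_2$ involves $z_1$. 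Indeed, from $z_1z_4\in J\subseteq J_1$ and $z_2^jz_4^i\in J\subseteq J_1$ one would get $z_1\in J_1$ and $z_2^{j_1}\in J_1$ for some $j_1\le j$, and symmetrically $z_4,\,z_3^{j_2}\in J_2$; hence $z_2^{j_1}z_3^{j_2}\in J_1\cap J_2=J$. But when $a+b<c$ no monomial in $z_2,z_3$ alone lies in $J$: a product of $j_1+j_2$ generators of $I$ dividing $u_2^{j_1}u_3^{j_2}$ has total degree at least $(a+b)(j_1+j_2)$, with equality only when it equals $u_2^{j_1}u_3^{j_2}$, so $u_2^{j_1}u_3^{j_2}\notin \mm I^{j_1+j_2}$. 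The correct mirror of the paper's case (ii) argument splits on $z_1z_4$: take $J_1=(z_1)+L_{24}$ and $J_2=(z_4)+L_{13}$, where $L_{24}$ (resp.\ $L_{13}$) is generated by the minimal generators of $J$ involving only $z_2,z_4$ (resp.\ only $z_1,z_3$); then $z_3$ and $z_2$ are nonzerodivisors on the two summands. You state exactly this correct version when treating (iii)/(iv), so this is a repairable inconsistency, but as written the step fails.

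Second, the Gr\"obner basis verification you defer in cases (iii)/(iv) is not a routine S-pair check; it needs arithmetic relating $(i,j)$ to $(\ell,m)$ that you never establish. For lex with $z_1>z_2>z_3>z_4$, the S-polynomial of $z_1z_4$ and $z_1^\ell z_3^m-z_2^mz_4^\ell$ is $z_2^mz_4^{\ell+1}$, and the only leading term that could divide it is $z_2^jz_4^i$; so your claim requires $j\le m$ and $i\le \ell+1$. On the other hand, minimality of the binomial forces $i\ge \ell+1$ (if $i\le\ell$, then the defining inequalities of $\Sc$ give $j\le\frac{c-b}{\gcd(c,b-a)}<m$, so $z_1^iz_3^j$ would divide $z_1^\ell z_3^m$). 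Thus in case (iii) your plan works only if $i=\ell+1$ exactly, a nontrivial fact about the minimal element of $\Sc$. This is a genuine constraint, not a formality: for $(a,b,c)=(3,17,21)$ one gets $(i,j)=(4,1)$ and $(\ell,m)=(2,3)$, so $i=\ell+2$, and the four elements of shape (iii) are then not a Gr\"obner basis, since the S-polynomial $z_2^3z_4^3$ is irreducible against them. In that example $J$ has further minimal generators $z_1^3z_3^2$ and $z_2^2z_4^3$ (note $u_1^3u_3^2=xy\cdot u_2^4u_4$), so one is actually in case (iv), and it is precisely the extra generator $z_2^2z_4^3$ that reduces the offending S-polynomial. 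Hence, to complete your argument you must additionally prove two statements your proposal never addresses: in case (iii) the minimal element of $\Sc$ satisfies $i=\ell+1$ (whence $j<m$), and in case (iv) the monomials of $L'$ always suffice to close all S-pairs. Neither is obvious, and neither is proved in the paper either, whose proof of this theorem establishes only the Cohen--Macaulay characterization.
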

\begin{proof}
We omit the proof of (a) which follows the same line of arguments as the proof of part (a) of Theorem~\ref{symmetricidealup}.

(b) Recall $F(I)\cong T/J$, if $T/J$ is Cohen-Macaulay, then $\mu(J)\leq 3$, because the Hilbert-Burch matrix of $J$ must be a $2\times 3$-matrix since $z_1z_4\in J$. On the other hand, if $J=(z_1z_4, z_1^{i}z_3^{j}, z_2^{j}z_4^{i})$, then $J$ is the ideal of $2$-minors of the matrix
\[
  \left( {\begin{array}{ccc}
   z_{1}^{i-1}z_3^j, & z_4,& 0 \\
   z_{2}^iz_4^{j-1}, & 0, &z_1 \\
\end{array} } \right).
\]
This implies that   $F(I)$ is Cohen--Macaulay.
\end{proof}

\begin{Corollary}\label{symmetricidealon}
Let $I=(x^c,x^by^a,x^ay^b,y^c)\subset K[x,y]$ be a symmetric monomial ideal.  Then
$\depth F(I)>0$.
\end{Corollary}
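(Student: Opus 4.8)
The plan is to reduce everything to the two preceding theorems wherever possible and to treat separately the one regime they leave open. Since $0<a<b<c$ is fixed, there are exactly three possibilities for the comparison of $a+b$ with $c$. If $a+b>c$, then Theorem~\ref{symmetricidealup}(b) lists the four possible shapes of $J$ and records that $F(I)$ is Cohen--Macaulay in case (i) and $\depth F(I)=1$ in cases (ii)--(iv); in particular $\depth F(I)>0$. If $a+b<c$, the same conclusion follows verbatim from Theorem~\ref{symmetricidealdown}(b). Hence the only case requiring a separate argument is the boundary case $a+b=c$, which is excluded from both theorems.

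For $a+b=c$ the idea is to show directly that $F(I)$ is an integral domain and then invoke the elementary fact that a positively graded $K$-algebra domain of positive Krull dimension has positive depth. The key observation is that when $a+b=c$ all four generators $x^c,\,x^by^a,\,x^ay^b,\,y^c$ have the same $S$-degree $c$, so $I$ is generated in a single degree. I would then use the description of the relations from Section~1: a minimal generator of the defining ideal $L$ of $R(I)$ has the form $u\prod_i z_i^{r_i}-v\prod_i z_i^{s_i}$ as in (\ref{relation}), with $\sum_i r_i=\sum_i s_i$ and $u\prod_i u_i^{r_i}=v\prod_i u_i^{s_i}$, and its reduction modulo $\mm$ is a (nonzero) monomial precisely when exactly one of $u,v$ equals $1$. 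Comparing $S$-degrees on both sides of $u\prod_i u_i^{r_i}=v\prod_i u_i^{s_i}$ and using that every $u_i$ has degree $c$ forces $\deg u=\deg v$; but a monomial generator requires exactly one of $u,v$ to equal $1$, whence the other would also have degree $0$ and hence equal $1$, a contradiction. Thus $J$ has no monomial generators, so it coincides with the toric ideal of $u_1,\dots,u_4$, and therefore $F(I)\iso K[x^c,x^by^a,x^ay^b,y^c]\subseteq K[x,y]$ is a domain.

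To conclude I would note that this subalgebra contains the algebraically independent elements $x^c$ and $y^c$, so $\dim F(I)=2>0$; equivalently, the image of $z_1$ is a nonzero homogeneous element of positive degree, hence a nonzerodivisor on the domain $F(I)$, giving $\depth F(I)\geq 1$. This settles the boundary case and completes the corollary. The only genuine content lies in the case $a+b=c$, and within it the one point needing care is the degree count showing that equigeneration of $I$ kills all monomial generators of $J$; everything else is a direct appeal to the two theorems or to standard facts about graded domains, so I do not anticipate any real obstacle.
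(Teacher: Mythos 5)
Your proposal is correct and follows essentially the same route as the paper: for $a+b\neq c$ both of you invoke Theorems~\ref{symmetricidealup} and \ref{symmetricidealdown}, and for $a+b=c$ both of you conclude via $F(I)\iso K[x^c,x^by^a,x^ay^b,y^c]$ being a domain of positive dimension. The only difference is that the paper simply asserts this isomorphism in the equigenerated case, whereas you supply the (correct) degree-count argument ruling out monomial generators of $J$; this is a welcome filling-in of a detail, not a different proof.
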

\begin{proof}
If $a+b\neq c$, then the assertion is shown  in Theorem~\ref{symmetricidealup} and  Theorem~\ref{symmetricidealdown}. If $a+b=c$,  then
$F(I)=K[x^c,x^by^a,x^ay^b,y^c]$ is a domain, so that $\depth F(I)>0$, also in this case.
\end{proof}

\begin{Corollary}
\label{noteasy}
Let $I=(x^c,x^by^a,x^ay^b,y^c)\subset K[x,y]$ be a symmetric monomial ideal. Then the following conditions are equivalent:
\begin{enumerate}
\item[(a)]
$F(I)$ is Cohen--Macaulay;
\item[(b)]
$\mu(J)\leq 3$;
\item[(c)]
$\mu(J)= 3$.
\end{enumerate}
\end{Corollary}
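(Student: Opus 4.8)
The plan is to split into the three cases $a+b>c$, $a+b<c$, and $a+b=c$, and in each to reduce the statement to results already available. A common preliminary observation is that in every case $F(I)$ has Krull dimension $2$ (it equals the analytic spread of the height-$2$ ideal $I\subset K[x,y]$), so that $\height J=2$ in $T=K[z_1,z_2,z_3,z_4]$. By Krull's principal ideal theorem this forces $\mu(J)\geq 2$, and $\mu(J)\leq 1$ cannot occur because a principal ideal has height at most $1$. Since (c)$\Rightarrow$(b) is trivial, the task reduces to proving (a)$\Leftrightarrow$(b) and to improving ``$\mu(J)\leq 3$'' to ``$\mu(J)=3$'', i.e.\ to excluding the possibility $\mu(J)=2$.

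For $a+b>c$ and $a+b<c$ the corollary is a direct reading of Theorem~\ref{symmetricidealup} and Theorem~\ref{symmetricidealdown}. In either theorem $F(I)$ is Cohen--Macaulay exactly when $J$ is generated as in case (i), and there $J$ is minimally generated by precisely three elements ($z_2z_3,z_2^r,z_3^r$, respectively $z_1z_4,z_1^iz_3^j,z_2^jz_4^i$), while in the cases (ii), (iii), (iv) one has $\mu(J)\geq 4$. Hence
\[
F(I)\ \text{Cohen--Macaulay}\iff J\ \text{is as in case (i)}\iff \mu(J)=3\iff \mu(J)\leq 3,
\]
which gives (a)$\Leftrightarrow$(c)$\Leftrightarrow$(b) at once.

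For $a+b=c$ the fiber cone is the domain $F(I)\cong K[x^c,x^by^a,x^ay^b,y^c]$, the homogeneous coordinate ring of a projective monomial curve in $\PP^3$ with exponent set $\{0,a,b,c\}$, and the equivalence (a)$\Leftrightarrow$(b) is exactly \cite[Lemma 4]{BSV}. It then remains to show $\mu(J)\neq 2$, that is, that $J$ is not a complete intersection. Here I would argue intrinsically: the curve has degree $e(F(I))=c$, so a codimension-two complete intersection $J=(f_1,f_2)$ with $\deg f_i=d_i\geq 2$ would force $c=d_1d_2$; examining the lowest-degree relations --- one always has the quadric $z_1z_4-z_2z_3\in J$ because $a+b=c$, and the number of independent quadratic relations is governed by the coincidences among the pairwise sums of $\{0,a,b,c\}$ --- one shows that $J$ cannot be generated by two forms of the degrees permitted by $c=d_1d_2$, a contradiction. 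Together with $\mu(J)\geq 2$ this yields $\mu(J)\geq 3$, completing (b)$\Rightarrow$(c).

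The main obstacle is precisely this last step. For $a+b\neq c$ everything is read off the explicit generator lists of Theorems~\ref{symmetricidealup} and~\ref{symmetricidealdown}, but for $a+b=c$ no such list is at hand, and excluding the complete-intersection case requires an intrinsic argument that a symmetric, non-degenerate projective monomial curve in $\PP^3$ is never an ideal-theoretic complete intersection. I expect the degree-and-quadric bookkeeping above to close this gap, and it is the only point where a genuinely new argument, rather than a citation or an appeal to the earlier theorems, is needed.
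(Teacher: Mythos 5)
Your treatment of the cases $a+b>c$ and $a+b<c$, and of the equivalence (a)$\Leftrightarrow$(b) when $a+b=c$, is exactly the paper's proof: read the minimal generating sets off Theorem~\ref{symmetricidealup} and Theorem~\ref{symmetricidealdown} (case (i) gives exactly three minimal generators, cases (ii)--(iv) give at least four), and quote \cite[Lemma 4]{BSV} for the projective monomial curve arising when $a+b=c$. The preliminary observation that $\height J=2$ forces $\mu(J)\geq 2$ is correct and harmless.

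Where you differ from the paper is the implication (b)$\Rightarrow$(c) for $a+b=c$, i.e.\ the exclusion of $\mu(J)=2$, and here your diagnosis is accurate in a way the paper is not: the paper asserts that (b)$\Leftrightarrow$(c) ``follows from the above theorems'', but both theorems hypothesize $a+b\neq c$, so this case is not actually covered there. The exclusion is also not vacuous: complete intersections do occur among non-symmetric monomial curves in $\PP^3$ (for exponents $\{0,1,2,4\}$ one has $J=(z_2^2-z_1z_3,\ z_3^2-z_1z_4)$, so $\mu(J)=2$), so the symmetry hypothesis must genuinely be used. Your sketch, however, stops short of a proof. Bezout together with the quadric count (for $c\geq 4$ the only coincidence among pairwise sums of $\{0,a,b,c\}$ is $0+c=a+b$, so $\dim_K J_2=1$; for $c$ prime Bezout alone finishes) only forces a hypothetical complete intersection to have the form $J=(z_1z_4-z_2z_3,\ g)$ with $\deg g=c/2$; the claim that no such $g$ exists is exactly what remains, and it does not follow from bookkeeping in degree $2$. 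It can be closed along the lines you indicate: such a $J$ would exhibit the curve as the scheme-theoretic intersection of the smooth quadric $Q=V(z_1z_4-z_2z_3)\cong\PP^1\times\PP^1$ with a hypersurface of degree $c/2$, hence as a divisor of bidegree $(c/2,c/2)$ on $Q$, whereas the symmetric curve is the image of $(s:t)\mapsto\bigl((s^b:t^b),(s^a:t^a)\bigr)$ and has bidegree $(a,b)$ on $Q$, with $a\neq b$. (Alternatively, compute that the relation lattice of $J$ is $\ZZ(1,-1,-1,1)+\ZZ(-a,0,c,-b)$ and check directly that no vector $p(1,-1,-1,1)\pm(-a,0,c,-b)$ has its positive entries summing to $c/2$.) With that supplement your argument is complete, and it in fact repairs the one point that the paper's own proof leaves open.
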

\begin{proof}
The equivalence of (a) and (b) is shown in the above theorems when $a+b\neq c$. In the case that $a+b=c$, it is shown in \cite[Lemma 4]{BSV}. The equivalence of (b) and (c) also follows form the above theorems.
\end{proof}

\medskip
\section{The fiber cone of $I=(x^{2a},x^{a}y^{b},x^{c}y^{d},y^{2b})$}
We consider a special class of ideals $I\subset  K[x,y]$, whose fiber cone is a complete intersection.
In this section we assume that we are  given positive integers  $a,b,c,d$ such that  $\gcd(a,c)=1$,  $\gcd(b,d)=1$  and $b\geq a>c$.

First we consider the case $bc+ad>2ab$. Thus  we have  $\frac{2b}{2b-d}-\frac{2a}{c}>0$ and $\frac{b}{2b-d}-\frac{a}{c}>0$.
  Therefore, there exist positive integers $i$, $j$ and $s$
such that
\begin{eqnarray}
\label{Dusseldorfineq}\frac{2a}{c}i+\frac{a}{c}j\leq s\leq \frac{2b}{2b-d}i+\frac{b}{2b-d}j.
\end{eqnarray}

Let $\Tc=\{(i,j,s)\in \mathbb{N}^{3}\mid \frac{2a}{c}i+\frac{a}{c}j\leq s\leq \frac{2b}{2b-d}i+\frac{b}{2b-d}j\}$, and let  $r$ be the smallest integer such that  $(i,j,r)\in \Tc$. Then we have
\begin{Theorem}\label{Romania}
Let $I=(x^{2a},x^{a}y^{b},x^{c}y^{d},y^{2b})\subset K[x,y]$ be the   monomial ideal with $bc+ad>2ab$.  Let $K[z_1,z_2,z_3,z_4]$ be the polynomial ring, and let $J$ be the kernel of the canonical map $T\to F(I)$ with  $z_i\mapsto u_i$ for $i=1, \ldots,4$,   where    $u_1=x^{2a}$, $u_2=x^ay^b$, $u_3=x^cy^d$ and $u_4=y^{2b}$. Then $J=(z_2^2-z_1z_4, z_3^{r})$. In particular, $J$ is a complete intersection.
\end{Theorem}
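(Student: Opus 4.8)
The plan is to prove that $J$ coincides with the complete intersection $A:=(z_2^2-z_1z_4,\,z_3^{r})$, which is obviously contained in $J$: the relation $u_2^2=u_1u_4=x^{2a}y^{2b}$ gives $z_2^2-z_1z_4\in J$, and the very choice of $r$ (the smallest third coordinate of a point of $\Tc$) means $u_3^{r}\in\mm I^{r}$, so that $z_3^{r}\in J$. Since $z_2^2-z_1z_4$ is an irreducible quadric and $z_3$ is a nonzerodivisor modulo it, these two elements form a regular sequence; hence $A$ is a complete intersection, $T/A$ is Cohen--Macaulay, and $\height A=2$. As $I$ is $\mm$-primary in the two-dimensional ring $S$, its analytic spread is $2$, so $\dim F(I)=2$ and $\height J=2=\height A$.

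First I would fix the lexicographic order with $z_2>z_1>z_3>z_4$. Then $\ini_<(z_2^2-z_1z_4)=z_2^2$ and $\ini_<(z_3^{r})=z_3^{r}$ are coprime, so $\{z_2^2-z_1z_4,\,z_3^{r}\}$ is a Gr\"obner basis of $A$ and the standard monomials $z_1^{\beta_1}z_2^{\epsilon}z_3^{\beta_3}z_4^{\beta_4}$ with $\epsilon\in\{0,1\}$ and $\beta_3<r$ form a $K$-basis of $T/A$. Because $A\subseteq J$, their images span $F(I)$, so it suffices to show these images are $K$-linearly independent in $F(I)$, i.e. that they are pairwise distinct minimal generators of the relevant powers $I^{n}$, $n=\beta_1+\epsilon+\beta_3+\beta_4$. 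Distinctness I would get from the relation lattice $\Lambda=\{\gamma\in\ZZ^{4}:\sum_i\gamma_i\mathbf v_i=0,\ \sum_i\gamma_i=0\}$, where $\mathbf v_1=(2a,0)$, $\mathbf v_2=(a,b)$, $\mathbf v_3=(c,d)$, $\mathbf v_4=(0,2b)$: the three defining linear forms are independent precisely because $bc+ad\neq 2ab$, so $\Lambda$ has rank one and is generated by $(1,-2,0,1)$. Hence two standard monomials with the same image differ by a multiple of $(1,-2,0,1)$, which is impossible since their $z_2$-exponents lie in $\{0,1\}$; the same computation shows $z_2^2-z_1z_4$ is the only binomial relation.

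The remaining, and main, point is the nonvanishing claim: every product $u_1^{\beta_1}u_2^{\beta_2}u_3^{\beta_3}u_4^{\beta_4}$ with $\beta_3<r$ is a minimal generator of $I^{n}$. Here I would use the linear form $\phi(X,Y)=bX+aY$, for which $u_1,u_2,u_4$ all take value $2ab$ while $\phi(u_3)=2ab+\delta$ with $\delta=bc+ad-2ab>0$. Thus for any monomial that is a degree-$n$ product of the $u_i$ the exponent of $u_3$ is intrinsic (distinct factorizations differ by elements of $\Lambda$, whose third coordinate vanishes), and passing to a proper divisor strictly decreases it. Writing the failure of minimality as the existence of a degree-$n$ product $g=u_1^iu_2^ju_3^ku_4^{\ell}$ dividing $M$, and translating the two divisibility inequalities in terms of $\eta=(2i+j)-(2\beta_1+\beta_2)$ and $\kappa=\beta_3-k\ge 1$, a short calculation reduces the condition to the existence of an integer $\eta$ with $\tfrac{(2b-d)\kappa}{b}\le\eta\le\tfrac{c\kappa}{a}$ for some $1\le\kappa\le\beta_3$, the degree constraint $\ell\ge 0$ being automatically satisfiable. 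This is exactly the statement that $u_3^{\kappa}$ is a non-minimal generator of $I^{\kappa}$, so the minimality of $r$ forces $\kappa\ge r$ and hence $\beta_3\ge r$, a contradiction. (Properness is free, because $bc+ad>2ab$ prevents both inequalities from being equalities simultaneously.)

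I expect this last step---the bookkeeping that reduces an arbitrary product with $\beta_3<r$ to the pure power $u_3^{\kappa}$ through the two integer inequalities---to be the main obstacle, since it is precisely the sort of linear-Diophantine feasibility question flagged in the introduction. Once it is settled, the standard monomials of $T/A$ map to linearly independent elements of $F(I)$, the surjection $T/A\twoheadrightarrow F(I)$ is an isomorphism, and therefore $J=A=(z_2^2-z_1z_4,\,z_3^{r})$ is a complete intersection.
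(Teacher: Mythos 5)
Your proof is correct, but it follows a genuinely different route from the paper's. The paper proves the inclusion $J\subseteq(z_2^2-z_1z_4,z_3^r)$ by classifying all possible shapes of minimal generators of $J$ --- pure powers, monomials in two or three of the variables, and the several binomial types --- and eliminating each one by direct arithmetic (with some cases only indicated as ``similar''). You instead run a spanning-plus-independence argument: since the leading terms $z_2^2$ and $z_3^r$ are coprime in the lex order $z_2>z_1>z_3>z_4$, the standard monomials $z_1^{\beta_1}z_2^{\epsilon}z_3^{\beta_3}z_4^{\beta_4}$ (with $\epsilon\le 1$, $\beta_3<r$) form a $K$-basis of $T/A$, their images span $F(I)$ because $A\subseteq J$, and the theorem reduces to showing these images are pairwise distinct minimal generators of the powers $I^n$. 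Both of your supporting computations check out: the relation lattice is indeed $\ZZ\,(1,-2,0,1)$ precisely because $bc+ad\neq 2ab$ (this is where $\epsilon\le 1$ is used, giving distinctness), and the weight $\phi=bX+aY$, which equals $2ab$ on $u_1,u_2,u_4$ and $2ab+\delta$ on $u_3$, forces any proper degree-$n$ divisor to strictly decrease the $u_3$-exponent, so that non-minimality of $u^{\beta}$ with $\beta_3<r$ produces an integer $\eta$ with $(2b-d)\kappa/b\le\eta\le c\kappa/a$ for some $1\le\kappa\le\beta_3$, i.e.\ a point of $\Tc$ with third coordinate $\kappa<r$, contradicting the minimality of $r$ (and the side constraint $\ell\ge 0$ is harmless, since $\eta\le c\kappa/a<\kappa$). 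Your route buys a uniform argument with a single Diophantine feasibility check in place of the paper's enumeration of cases, and as a byproduct it yields an explicit $K$-basis of every graded component of $F(I)$, hence the function $n\mapsto\mu(I^n)$; the paper's route is more elementary (no Gr\"obner bases or lattice saturation) and its case catalogue is what gets reused, essentially verbatim, in Theorem~\ref{kolon} and Theorem~\ref{Hibinotcome}.
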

\begin{proof} Notice that $u_2^2=u_1u_4$, which implies that $z_2^2-z_1z_4\in J$. By the choice of $r$, we have
 $cr\geq 2ai+aj$ and $dr\geq bj+2bk$, where $k=r-i-j$. Moreover one of the two inequalities must be strict, because in (\ref{Dusseldorfineq}) one of the two
inequalities must be strict. This implies that $z_3^{r}-vz_1^{i}z_2^{j}z_4^{k}$ is a relation of $R(I)$ with a monomial $v\neq 1$. Hence $z_3^r \in J$.

It remains to be shown that $J\subseteq (z_2^2-z_1z_4, z_3^{r})$. We know that the generators of $J$ are monomials or binomials arising from relations $f=u\prod_{i=1}^{4}z_i^{r_i}-v\prod_{i=1}^{4}z_i^{s_i}$ of the Rees ring by reduction modulo $(x,y)$. Thus  the possible minimal generators of $J$ must  be of the form  as follows:
 (1)  $z_i^r$, with $r\geq 2$ for $i=1,\ldots,4$; (2)   $z_1^{r}-z_2^iz_3^jz_4^{r-i-j}$,
$z_2^{r}-z_1^iz_3^jz_4^{r-i-j}$, $z_3^{r}-z_1^iz_2^jz_4^{r-i-j}$, $z_4^{r}-z_1^iz_2^jz_3^{r-i-j}$  with $r\geq 2$; (3) $z_1^iz_2^jz_3^{k}$, $z_1^iz_2^jz_4^{k}$,   $z_1^iz_3^jz_4^{k}$,  $z_2^iz_3^jz_4^{k}$ with $i,j,k>0$; (4)  $z_1^iz_2^j-z_3^{k}z_4^{\ell}$,  $z_1^iz_3^j-z_2^{k}z_4^{\ell}$,
 $z_1^iz_4^j-z_2^{k}z_3^{\ell}$ with $i+j>0$; (5) $z_1^iz_2^j$, $z_3^{i}z_4^{j}$,  $z_1^iz_3^j$,  $z_2^{i}z_4^{j}$,
 $z_1^iz_4^j$, $z_2^{i}z_3^{j}$ with $i,j>0$.

 Some of these monomials and binomials listed above do belong to $J$, but are not minimal  except $z_2^2-z_1z_4, z_3^{r}$ by direct calculations.
We demonstrate this in some cases. In the  remaining cases the arguments are similar.  It is obvious that $z_1^r, z_4^r,z_1^{r}-z_2^iz_3^jz_4^{r-i-j}, z_4^{r}-z_1^iz_2^jz_3^{r-i-j}$ do not belong to $J$.

If   $z_2^r\in J$, then there exists some relation $f=z_2^r-vz_1^iz_3^jz_4^{k}$ in $R(I)$ with $r=i+j+k$ and a monomial $v\neq 1$.
It follows that $ar\geq 2ai+cj$, $br\geq dj+2bk$ and one of the two inequalities must be strict. Thus we have  $(\frac{c}{2a}+\frac{d}{2b}-1)j<0$, this implies  that $j<0$, a contradiction.

If $z_3^{r}-z_1^iz_2^jz_4^{k}\in J$, where  $k=r-i-j$,  then $cr=2ai+aj$ and $dr=bj+2bk$. It follows that  $(\frac{c}{2a}+\frac{d}{2b}-1)r=0$, this implies  that $r=0$, a contradiction.

If   $z_1^iz_2^jz_3^{k}\in J$   with $i,j,k>0$, then there exists some relation $f=z_1^iz_2^jz_3^{k}-vz_4^{r}$ in $R(I)$ with $r=i+j+k$ and a monomial $v\in K[x,y]$.
It follows that $x^{2ai+aj+ck}\mid v$,  $bj+dk\geq 2br$. Thus we have  $2bi+bj+(2b-d)k\leq 0$, this implies  that $i=j=k=0$ or at least one of $i,j,k$ is negative, a contradiction.

If  $z_1^iz_3^jz_4^{k}$  with $i,j,k>0$  is  a minimal generator of $J$,  then there exists some relation $f=z_1^iz_3^jz_4^{k}-vz_2^{r}$ in $R(I)$ with $r=i+j+k$ and a monomial $v\in K[x,y]$. It follows that $2ai+cj\geq ar$,  $dj+2bk\geq br$. We consider three cases:

(i) $i=k$, then we have $2ai+cj\geq a(2i+j)$. This implies that $cj\geq aj$. From the fact $c<a$, we can get $j=0$,  a contradiction.

(ii) $i<k$, then  $z_2^{2i}z_3^jz_4^{k-i}=z_1^iz_3^jz_4^{k}-((z_1z_4)^{i}-z_2^{2i})z_3^{j}z_4^{k-i}$  is  a minimal generator of $J$,  a contradiction.

(iii) $i>k$, this case can be proved as similar to (ii).

If   $z_1^iz_2^j-z_3^{k}z_4^{\ell}\in J$  with $i+j>0$, then $2ai+aj=ck$, $bj=dk+2b\ell$ and $i+j=k+\ell$. This implies that $i=j=k=\ell=0$, a contradiction.

If $z_1^iz_3^j$ with $i,j>0$ is  a minimal generator of $J$. Then  $j<r$, and  there exists some relation $f=z_1^iz_3^j-vz_2^kz_4^{\ell}$ in $R(I)$ with $i+j=k+\ell$ and a monomial $v\neq 1$. It follows that  $u_1^iu_3^j=vu_2^ku_4^{\ell}$. We consider three cases;
 (i)  $k=0$, then we have $dj\geq 2b\ell$. Hence $j>\ell$ because $d<2b$, a contradiction.
 (ii) $k>0$ and $i\geq k$, then  $u_1^{i-k}u_3^{j}=vu_4^{k+\ell}$. This implies that $dj\geq2b(k+\ell)$. From the fact that $d<2b$, we have $j>k+\ell$, a contradiction.
 (iii)  $k>0$ and $i<k$, then $u_3^{j}=vu_1^{k-i}u_4^{k+\ell}$. By the choice of $r$, we know that $j\geq r$, a contradiction.

Finally, if $z_2^{r}-z_1^iz_3^jz_4^{k}\in J$, then we have $ar=2ai+cj$, $br=dj+2bk$ and $r=i+j+k$. It follows that $i=k$ and $j=0$. Hence
 $z_2^{2}-z_1z_4$ divides $z_2^{r}-z_1^iz_3^jz_4^{k}$.
\end{proof}

Next, we consider the case $bc+ad<2ab$. In this case, we have $0<(2-\frac{d}{b})-\frac{c}{a}<1$. Thus there exist positive integers $s$ and $\ell$
such that
\begin{eqnarray}
\label{inequality1} \frac{c}{a}s\leq \ell\leq (2-\frac{d}{b})s.
\end{eqnarray}
Let $\D=\{(s,\ell)\in \mathbb{N}^{2}\mid \frac{c}{a}s\leq \ell\leq (2-\frac{d}{b})s\}$,  and  let   $r$ be the smallest integer such that $(r,\ell)\in \D$.
We set $i=\frac{\ell}{2}$ and $j=0$ if $\ell$ is even, and  $i=\frac{\ell-1}{2}$ and $j=1$, if  $\ell$ is odd.

With the same methods as in the proof of Theorem~\ref{Romania} one obtains

\begin{Theorem}\label{kolon}
Let $I=(x^{2a},x^{a}y^{b},x^{c}y^{d},y^{2b})\subset K[x,y]$ be the   monomial ideal with $bc+ad<2ab$.
We write  $F(I)\cong K[z_1,z_2,z_3,z_4]/J$ as in Theorem~\ref{Romania}. Then  $J=(z_2^2-z_1z_4, z_1^{i}z_2^{j}z_4^{r-i-j})$. In particular, $J$ is a complete intersection.
\end{Theorem}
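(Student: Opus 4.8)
The plan is to imitate the proof of Theorem~\ref{Romania}, interchanging the roles played by $z_3^r$ and by the pure monomial $z_1^iz_2^jz_4^{r-i-j}$. The hypothesis $bc+ad<2ab$ (rather than $bc+ad>2ab$) is precisely what reverses the sign in the exponent estimates, so that this time it is the monomial supported on $z_1,z_2,z_4$ that survives as the second generator, while powers of $z_3$ never enter a minimal generating set.

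First I would check that both asserted generators lie in $J$. For the binomial this is immediate from $u_2^2=x^{2a}y^{2b}=u_1u_4$, which gives $z_2^2-z_1z_4\in J$. For the monomial, put $k=r-i-j$; since $2i+j=\ell$ one finds
\[
u_1^iu_2^ju_4^k=x^{a(2i+j)}y^{b(j+2k)}=x^{a\ell}y^{b(2r-\ell)}.
\]
The defining inequalities $\frac{c}{a}r\le\ell\le(2-\frac{d}{b})r$ of $\D$ are equivalent to $cr\le a\ell$ and $dr\le b(2r-\ell)$, i.e.\ to $u_3^r=x^{cr}y^{dr}$ dividing $u_1^iu_2^ju_4^k$. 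Were both inequalities equalities we would obtain $\frac{c}{a}+\frac{d}{b}=2$, contradicting $bc+ad<2ab$; hence the division is proper, and writing $u_1^iu_2^ju_4^k=v\,u_3^r$ with a nonconstant monomial $v$ produces the Rees relation $z_1^iz_2^jz_4^k-vz_3^r\in L$. Reducing modulo $\mm$ then yields $z_1^iz_2^jz_4^k\in J$.

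For the reverse inclusion $J\subseteq(z_2^2-z_1z_4,\,z_1^iz_2^jz_4^{r-i-j})$ I would run through the same list of candidate minimal generators as in Theorem~\ref{Romania}. Pure powers $z_1^r,z_4^r$ and the binomials coupling a single variable with $z_3$ are excluded using $c<a$, $\gcd(a,c)=1$ and $\gcd(b,d)=1$ exactly as there; monomials and binomials genuinely involving $z_3$ are rewritten modulo $z_2^2-z_1z_4$ (trading $z_1z_4$ for $z_2^2$, as in the identity used in the proof of Theorem~\ref{Romania}) and shown to be non-minimal. Using $z_2^2-z_1z_4$ one may normalise any monomial of $J$ supported on $z_1,z_2,z_4$ so that its $z_2$-exponent is $0$ or $1$; such a normalised monomial $z_1^{i'}z_2^{j'}z_4^{k'}$ then lies in $J$ exactly when the point $(r',\ell')=(i'+j'+k',\,2i'+j')$ lies in $\D$, by the same computation as above applied to $u_3^{r'}$.

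The step I expect to be the main obstacle is showing that the minimality of $r$ in $\D$, together with the relation $z_2^2-z_1z_4$, forces every such monomial of $J$ into $(z_2^2-z_1z_4,\,z_1^iz_2^jz_4^{r-i-j})$. Since $\D$ is the set of lattice points lying between two rays through the origin and is \emph{not} upward closed under the componentwise order, this is not a single divisibility relation but the same delicate exponent bookkeeping as the corresponding step of Theorem~\ref{Romania}; I would carry it out by comparing the exponents arising from $(r',\ell')\in\D$ with those of the minimal pair $(r,\ell)$, possibly organising the argument through a Gr\"obner basis of $(z_2^2-z_1z_4,\,z_1^iz_2^jz_4^{r-i-j})$ as in the proof of Theorem~\ref{symmetricidealup}. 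Once both inclusions are established we obtain $J=(z_2^2-z_1z_4,\,z_1^iz_2^jz_4^{r-i-j})$; and since $\dim F(I)=2$ forces $\height J=2$, a height-$2$ ideal generated by two elements is a complete intersection.
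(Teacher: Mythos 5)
Your first half coincides with the paper's own argument (and even repairs a small typo in it): $u_2^2=u_1u_4$ gives $z_2^2-z_1z_4\in J$, and the two inequalities defining $\D$, which cannot both be equalities because $bc+ad<2ab$, say precisely that $u_3^r$ properly divides $u_1^iu_2^ju_4^{r-i-j}$, so that $z_1^iz_2^jz_4^{r-i-j}\in J$. For the reverse inclusion the paper says only that it ``is a case by case discussion as in Theorem~\ref{Romania} which we omit'', so structurally you are following the paper; the question is whether your sketch of that omitted half would actually close.

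It would not, because the pivot of your sketch is false: you claim that a normalised monomial $z_1^{i'}z_2^{j'}z_4^{k'}$ lies in $J$ \emph{exactly} when $(r',\ell')=(i'+j'+k',\,2i'+j')\in\D$. The ``if'' direction is your computation, but the ``only if'' direction fails, because membership of $u_1^{i'}u_2^{j'}u_4^{k'}$ in $\mm I^{r'}$ need not be witnessed by $u_3^{r'}$ dividing it: it can be witnessed by a mixed product $u_1^pu_2^qu_3^su_4^t$ with $0<s<r'$. Concretely, take $(a,b,c,d)=(5,7,4,8)$, i.e.\ $I=(x^{10},x^5y^7,x^4y^8,y^{14})$; all hypotheses hold, since $bc+ad=68<70=2ab$, and here $r=5$, $\ell=4$, so the asserted second generator is $z_1^2z_4^3$. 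The normalised monomial $z_1^2z_4^4$ lies in $J$, because
\[
u_1^2u_4^4=x^{20}y^{56}=y^2\cdot u_3^5u_4\in \mm I^6,
\]
yet $(r',\ell')=(6,4)\notin\D$, since $\frac{c}{a}\cdot 6=\frac{24}{5}>4$. (This monomial is of course a multiple of $z_1^2z_4^3$, so the theorem is untouched; what fails is your membership criterion, on which the rest of your bookkeeping is built.) The correct criterion is: $z_1^{i'}z_2^{j'}z_4^{k'}\in J$ if and only if there exists $(s,\lambda)\in\D$ with $s\le r'$, $\lambda\le\ell'$ and $\ell'-\lambda\le 2(r'-s)$. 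The set of monomials that must be pushed into $(z_2^2-z_1z_4,\;z_1^iz_2^jz_4^{r-i-j})$ is therefore strictly larger than the one your equivalence produces, and controlling it amounts to showing that the minimal pair $(r,\ell)$ dominates every $(s,\lambda)\in\D$ in this order; that is a genuine lattice-point argument (already the fact that level $r$ admits only the single integer $\ell$ requires proof), and it is exactly the ``delicate bookkeeping'' you deferred. In short: you reproduce the half the paper proves, but your proposed route through $\D$-membership for the half the paper omits starts from a false reduction and would need to be replaced by the correct criterion above.
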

\begin{proof}  Notice that $u_2^2=u_1u_4$, which implies that $z_2^2-z_1z_4\in J$. By the choice of $i,j,r$, we have $\ell=2i+j$. Set $k=r-i-j$. We obtain that $2ai+j\geq cr$, $bj+2bk\geq dr$.  Moreover one of the two inequalities must be strict, because in (\ref{inequality1}) one of the two inequalities must be strict. This implies that $z_1^{i}z_2^{j}z_4^{r-i-j}-vz_3^{r}$ is a relation of $R(I)$ with a monomial $v\neq 1$. Hence $z_1^{i}z_2^{j}z_4^{r-i-j}\in J$.

The proof for the fact that  $J\subseteq (z_2^2-z_1z_4, z_1^{i}z_2^{j}z_4^{r-i-j})$ is a case by case discussion as in Theorem~\ref{Romania} which we omit.
\end{proof}

\begin{Theorem}\label{Hibinotcome}
Let $I=(x^{2a},x^{a}y^{b},x^{c}y^{d},y^{2b})\subset K[x,y]$ be  the monomial ideal with $bc+ad=2ab$.
Then  $F(I)\cong K[z_1,z_2,z_3,z_4]/J$,  where $J=(z_2^2-z_1z_4, z_3^{a}-z_1^{i}z_2^{j}z_4^{a-i-j})$.  Here
$i=\frac{c}{2}$ and $j=0$,  if $c$ is even, and $i=\frac{c-1}{2}$ and $j=1$,  if $c$ is odd. In particular, $J$ is a complete intersection.
\end{Theorem}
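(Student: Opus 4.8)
The plan is to exploit the fact that the boundary hypothesis $bc+ad=2ab$ makes all four exponent vectors collinear, which forces $F(I)$ to be a domain and removes every monomial generator from $J$; after that the two binomial generators are pinned down by a reduction argument parallel to Theorem~\ref{Romania}.

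\emph{Membership.} First I would check the two proposed binomials lie in $J$. Since $u_2^2=x^{2a}y^{2b}=u_1u_4$, we have $z_2^2-z_1z_4\in J$. For the second, write $k=a-i-j$ with $i,j$ as in the statement. The hypothesis $bc+ad=2ab$ is equivalent to $\frac{c}{a}+\frac{d}{b}=2$, and one checks $2i+j=c$ and $j+2k=2a-c=\frac{ad}{b}$, whence $u_3^{\,a}=x^{ca}y^{da}=u_1^iu_2^ju_4^{k}$ and $g:=z_3^{a}-z_1^iz_2^jz_4^{k}\in J$. Here $a>c$ gives $k>0$, and since $\gcd(a,c)=1$ the exponent $a$ is the least one for which a pure power of $u_3$ becomes such a product.

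\emph{Collinearity kills monomials.} The geometric content of $bc+ad=2ab$ is that $(c,d)$ lies on the segment from $(2a,0)$ to $(0,2b)$, of which $(a,b)$ is the midpoint; equivalently all four exponent vectors lie on the line $bx+ay=2ab$. Assigning the weight $w(x)=b$, $w(y)=a$, every generator $u_i$ then has the same weight $2ab$. I would use this to exclude monomial generators: by Section~1 a monomial $\prod z_i^{t_i}\in J$ would arise from a Rees relation $\prod u_i^{t_i}=v\prod u_i^{s_i}$ with $v\neq 1$ and $\sum t_i=\sum s_i$; comparing $w$-weights gives $w(v)=0$, hence $v=1$, a contradiction. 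Thus $J$ has no monomial generators, so it is generated by binomials with $u=v=1$; in particular $J=\ker(T\to K[x,y])$ is prime and $F(I)\cong K[x^{2a},x^ay^b,x^cy^d,y^{2b}]$ is a two-dimensional domain. Since $I$ is $\mm$-primary, $\dim F(I)=2$ and so $\height J=2$.

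\emph{Generation and complete intersection.} It remains to prove $J=(f_1,f_2)$ with $f_1=z_2^2-z_1z_4$, $f_2=g$. As a guide I would first record that the relation lattice $\Lambda=\ker(\ZZ^4\to\ZZ^2)$ is generated by $\lambda_1=(1,-2,0,1)$ and $\lambda_2=(-i,-j,a,-k)$: for any $(r_1,\dots,r_4)\in\Lambda$ the identity $a(2r_1+r_2)=-cr_3$ with $\gcd(a,c)=1$ forces $a\mid r_3$, and subtracting the corresponding multiple of $\lambda_2$ leaves a lattice vector with third coordinate $0$, which the two defining equations force to be an integral multiple of $\lambda_1$. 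Because a lattice basis need not already be a generating set of the ideal, I would then confirm generation by the case analysis of Theorem~\ref{Romania}, restricted now to the binomial shapes in its list (the ``type~(4)'' binomials), using the fact that in the boundary case the two bounds in (\ref{Dusseldorfineq}) coincide, so that every binomial of $J$ is divisible by, or reduces modulo $f_1,f_2$ to, the zero relation. Hence $J=(f_1,f_2)$, and since $\mu(J)=2=\height J$, the ideal $J$ is a complete intersection. I expect this last verification --- that no further binomial survives as a minimal generator, i.e. that $\{f_1,f_2\}$ is already a Markov basis of $\Lambda$ and not merely a lattice basis --- to be the main obstacle, and the collinearity together with $\gcd(a,c)=1$ is exactly what makes it go through.
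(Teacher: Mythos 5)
Your membership computations, your weight argument showing that $J$ contains no monomial generators (hence coincides with the prime toric ideal of $K[x^{2a},x^ay^b,x^cy^d,y^{2b}]$ and has height $2$), and your identification of the relation lattice $\Lambda=\ZZ\lambda_1+\ZZ\lambda_2$ are all correct. But the theorem is not proved: the entire content of the statement is the equality $J=(f_1,f_2)$, and at exactly this point you stop proving and start planning. As you yourself note, a lattice basis of $\Lambda$ need not generate the lattice ideal (only a Markov basis does), and your proposed fix --- rerunning ``the case analysis of Theorem~\ref{Romania}, restricted to the type (4) binomials'' --- is neither carried out nor would it transfer as stated: that analysis lives in the regime $bc+ad>2ab$, where every relation carries a monomial coefficient $v\neq 1$ and $J$ is spanned by few monomials and binomials; in the boundary case $bc+ad=2ab$ the situation is the opposite (all relations have $u=v=1$, as your own weight argument shows), so $J$ contains an infinite family of honest binomials indexed by $\Lambda$, and one must actually show each of them lies in $(f_1,f_2)$. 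Observing that the two bounds in (\ref{Dusseldorfineq}) coincide does not accomplish this reduction.

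The paper closes precisely this gap by an argument that avoids any Markov-basis verification. Set $L=(f_1,f_2)$. With respect to the lexicographic order induced by $z_3>z_2>z_1>z_4$ one has $\ini_<(L)=(z_2^2,z_3^a)$, so $f_1,f_2$ is a regular sequence and $\height L=2$. Next, $L$ is shown to be prime: $z_1$ is a nonzero-divisor modulo $L$ (being one modulo $\ini_<(L)$), so $T/L$ embeds into $(T/L)_{z_1}$; eliminating $z_4=z_2^2z_1^{-1}$ identifies $(T/L)_{z_1}$ with a localization of $K[z_1,z_2,z_3]/(z_2^{2a-c}-z_1^{a-c}z_3^{a})$, and this hypersurface is a domain because $\gcd(a,c)=1$ makes the exponent vector primitive, so the binomial is the lattice ideal of a torsion-free rank-one quotient of $\ZZ^3$, hence toric. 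Since $L\subseteq J$ are both prime of height $2$, they are equal, and in particular no further generators can occur. If you wish to keep your lattice-theoretic framework, the clean way to finish along your lines is the saturation formula $J=L:(z_1z_2z_3z_4)^{\infty}$, which holds because $\lambda_1,\lambda_2$ generate $\Lambda$; primality of $L$, together with the fact that no variable lies in $L$, then yields $L=J$. But that primality statement is exactly the nontrivial step, and it is the one your proposal leaves open.
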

\begin{proof}  Notice that $u_2^2=u_1u_4$, which implies that $z_2^2-z_1z_4\in J$. Next we show that $z_3^{a}-z_1^{i}z_2^{j}z_4^{a-i-j}\in J$.
Indeed, if  $c$ is even, then $c=2c'$. It follows that $ad=2ab-bc=2b(a-c')$. Hence
$u_3^a=x^{ac}y^{ad}=x^{2ac'}y^{2b(a-c')}=u_1^{c'}u_4^{a-c'}=u_1^{\frac{c}{2}}u_4^{a-\frac{c}{2}}=u_1^{i}u_2^{j}u_4^{a-i-j}$ where $i=\frac{c}{2}$ and $j=0$.
If  $c$ is odd, then $c=2c'+1$, i.e., $c'=\frac{c-1}{2}$. It follows that $ad=2ab-bc=2b(a-c')-b$. Therefore,
$u_3^a=x^{ac}y^{ad}=x^{ac}y^{2b(a-c')-b}=x^{a(2c'+1)}y^{2b(a-c')-b}=x^{a(2c'+1)}y^{2b(a-c'-1)+b}
=u_1^{\frac{c-1}{2}}u_2u_4^{a-\frac{c+1}{2}}=u_1^{i}u_2^{j}u_4^{a-i-j}$, where $i=\frac{c-1}{2}$ and $j=1$.

Let $L=( z_2^2-z_1z_4, z_3^{a}-z_1^{i}z_2^{j}z_4^{a-i-j})$. Since $\ini_<(L)=(z_2^2,z_3^a)$ with respect to the  lexicographic order induced by $z_3>z_2>z_1>z_4$, we see that  $L$  is generated by a regular sequence, and hence $\height(L)=2$. We will  show that $L$ is a prime ideal. This will then prove that $L=J$,  since $L\subseteq J$ and $J$ is also a prime ideal of height $2$, because $F(I)\iso K[x^{2a},x^{a}y^{b},x^{c}y^{d},y^{2b}].$

In order to see that $L$ is a prime ideal, we first observe that $z_1$ is a nonzero-divisor modulo $L$, because it is a nonzero-divisor modulo $\ini_<(L)$. Therefore,  by localization,  we obtain an injective map $T/L\to (T/L)_{z_1}$.  Thus it suffices to show that $(T/L)_{z_1}$ is a domain.  Note that $L_{z_1}=(z_2^2z_1^{-1}-z_4, f)$ where $f=z_3^{a}-z_1^{i}z_2^{j}z_4^{a-i-j}$.
Replacing $z_4$ by $z_2^2z_1^{-1}$ in $f$, we obtain that $T_{z_1}/L_{z_1}=K[z_1^{\pm 1},z_2,z_3]/(g)$
where
$(g)=(z_3^a-z_1^{i}z_2^{j}(z_2^2z_1^{-1})^{a-i-j})=(z_3^a-z_1^{-a+2i+j}z_2^{2a-2i-j})=(z_2^{2a-2i-j}-z_3^az_1^{a-2i-j})=(z_2^{a-c}-z_1^{a-c}z_3^{a})$,
where $c=2i+j$.

 The $K$-algebra $A=K[z_1,z_2,z_3]/(h)$ is a domain, where  $h=z_2^{a-c}-z_1^{a-c}z_3^{a}$,  because $\gcd(a,c)=1$. Indeed, let $v=(c-a,a-c,-a)\in  \ZZ^3$, and let $L=\ZZ v$ the sublattice of  $\ZZ^3$ spanned by $v$. Then $(h)$ may be viewed as the lattice ideal of $L$. The condition $\gcd(a,c)=1$ implies that $\ZZ^3/L$ is torsionfree. Therefore, by \cite[Theorem 3.17]{HHO} it follows that $A$ is a toric ring, and hence a domain. Now this implies that  $T_{z_1}/L_{z_1}=K[z_1^{\pm 1},z_2,z_3]/(g)$ is a domain, because it is isomorphic to $A_{z_1}$.
\end{proof}

\medskip

{\bf Acknowledgement.}  This paper is supported by the National Natural Science Foundation of
China (No. 11271275) and by the Foundation of the Priority Academic Program Development of Jiangsu Higher Education Institutions.

\medskip

\end{document}